\let\OLDthebibliography\thebibliography
\renewcommand\thebibliography[1]{
  \OLDthebibliography{#1}
  \setlength{\parskip}{0pt}
  \setlength{\itemsep}{0pt plus 0.0ex}
}
\def\numberlikeadb{\global\def\theequation{\thesection.\arabic{equation}}}
\newtheorem{theorem}{Theorem}[section]
\newtheorem{lemma}[theorem]{Lemma}
\newtheorem{corollary}[theorem]{Corollary}
\newtheorem{remark}[theorem]{Remark}
\begin{document}

\title{Bounds for the chi-square approximation of the power divergence family of statistics}
\author{Robert E. Gaunt\footnote{Department of Mathematics, The University of Manchester, Oxford Road, Manchester M13 9PL, UK, robert.gaunt@manchester.ac.uk}}  


\date{} 
\maketitle

\vspace{-5mm}


\begin{abstract}It is well-known that each statistic in the family of power divergence statistics, across $n$ trials and $r$ classifications with index parameter $\lambda\in\mathbb{R}$ (the Pearson, likelihood ratio and Freeman-Tukey statistics correspond to $\lambda=1,0,-1/2$, respectively) is asymptotically chi-square distributed as the sample size tends to infinity. In this paper, we obtain explicit bounds on this distributional approximation, measured using smooth test functions, that hold for a given finite sample $n$, and all index parameters ($\lambda>-1$) for which such finite sample bounds are meaningful. We obtain  bounds that are of the optimal order $n^{-1}$. The dependence of our bounds on the index parameter $\lambda$ and the cell classification probabilities is also optimal, and the dependence on the number of cells is also respectable.  Our bounds generalise, complement and improve on recent results from the literature.
\end{abstract}

\noindent{{\bf{Keywords:}}} Power divergence statistic; likelihood ratio statistic; 
Pearson's statistic; chi-square approximation; rate of convergence; Stein's method.

\noindent{{{\bf{AMS 2010 Subject Classification:}}} Primary 60F05; 62E17

\section{Introduction}


Consider the setting of a multinomial goodness-of-fit test, with $n$ independent trials in which each trial leads to a unique classification over $r\geq2$ classes. Let $U_1,\ldots,U_r$ represent the observed frequencies arising in each class,
 and denote the non-zero classification probabilities by $\boldsymbol{p}=(p_1,\ldots,p_r)$. In 1984, Cressie and Read \cite{cr84} introduced the \emph{power divergence family of statistics} for testing the null hypothesis $H_0$: $\boldsymbol{p}=\boldsymbol{p}_0$ against the alternative hypothesis $H_1$: $\boldsymbol{p}=\boldsymbol{p}_1$. These statistics are given by
\begin{equation}\label{powerdiv}T_\lambda=\frac{2}{\lambda(\lambda+1)}\sum_{j=1}^rU_j\bigg[\bigg(\frac{U_j}{np_j}\bigg)^\lambda-1\bigg],
\end{equation}
where the index parameter $\lambda\in\mathbb{R}$. Here, and throughout the paper, we assume the validity of the null hypothesis, and, for ease of notation, suppress the subscript in the notation $\boldsymbol{p}_0$. Note that $\sum_{j=1}^rU_j=n$ and $\sum_{j=1}^rp_j=1$.

When $\lambda=0,-1$, the notation (\ref{powerdiv}) should be understood as a result of passage to the limit, in which cases we recover the log-likelihood ratio and modified log likelihood ratio statistics, respectively:
\begin{align}\label{lrs} L&=2\sum_{j=1}^r U_j\log\bigg(\frac{U_j}{np_j}\bigg),\\
\label{klstat}GM^2&=2n\sum_{j=1}^r p_j\log\bigg(\frac{np_j}{U_j}\bigg).
\end{align}
The cases $\lambda=1,-1/2$ correspond to Pearson's chi-square statistic \cite{pearson} and the Freeman-Tukey statistic \cite{ft50}, given by
\begin{align} \label{pearsonstatistic}\chi^2 &= \sum_{j=1}^r \frac{(U_j - n p_j)^2}{n p_j}, \\
T^2&=4\sum_{j=1}^r\big(\sqrt{U_j}-\sqrt{np_j}\big)^2. \nonumber
\end{align}
The power divergence family of statistics can therefore be seen to unify several commonly used goodness-of-fit tests. The statistic $T_{2/3}$, often referred to as the Cressie-Read statistic, was also suggested by \cite{cr84,rc88} as a good alternative to the classical likelihood ratio and Pearson statistics. 

A fundamental result 
is that, 
for all $\lambda\in\mathbb{R}$, the statistic $T_\lambda$ converges in distribution to the $\chi_{(r-1)}^2$ distribution as the number of trials $n$ tends to infinity (see \cite{cr84}, p.\ 443). However, in practice one has a finite sample, and so it is of interest to assess the quality of this distributional approximation for finite $n$.  In the literature, this has been done on the one hand through theoretical bounds  on the rate of convergence, which will be the focus of this paper. It has also been done through simulation studies. The findings of simulation studies from the literature are briefly reviewed in Remark \ref{remsimul}, in which we also discuss how the theoretical bounds obtained in this paper reflect the findings of these studies.

 From the theoretical literature, the quality of the chi-square approximation of the distibution of the statistic $T_{\lambda}$ has been assessed by Edgeworth expansions \cite{a10,asylbekov,fv20,pu13,read84,ulyanov}. In \cite{read84}, Edgeworth expansions were used to propose closer approximations to the distribution function of $T_{\lambda}$ that are of the form of the $\chi_{(r-1)}^2$ distribution function plus a $o(1)$ correction. The work of \cite{read84} generalised results of \cite{sf84} that had been given for the Pearson, likelihood ratio and Freeman-Tukey statistics ($\lambda=1,0,-1/2$).  However, as noted by \cite{pu13}, it is impossible to determine a rate of convergence from the expansions given in \cite{read84,sf84}. Bounds on the rate of convergence of the distribution of $T_\lambda$ to its limiting $\chi_{(r-1)}^2$ distribution via Edgeworth expansions were obtained by \cite{fv20,pu13}. For $r\geq4$,  \cite{ulyanov} obtained a $O(n^{-(r-1)/r})$ bound on the rate of convergence in the Kolmogorov distance (see also \cite{a10} for a refinement of this result) and, for $r=3$, \cite{asylbekov} obtained a $O(n^{-3/4+0.065})$ bound on the rate of convergence in the same metric, 
  with both bounds holding for all $\lambda\in\mathbb{R}$. 

Bounds on the rate of convergence have also been given for special cases of the power divergence family. For the likelihood ratio statistic, \cite{ar20} recently used Stein's method to obtain an explicit $O(n^{-1/2})$ bound for smooth test functions. It is worth noting that the setting of \cite{ar20} was more general than the categorical data setting of this paper. For Pearson's statistic, it was shown by \cite{yarnold} using Edgeworth expansions that a bound on the rate of convergence of Pearson's statistic, in the Kolmogorov distance is $O(n^{-(r-1)/r})$, for $r\geq2$, which was improved by \cite{gu03} to $O(n^{-1})$ for $r\geq6$. An explicit $O(n^{-1/2})$ Kolmogorov distance bound was obtained by Stein's method in \cite{mann97}. Recently, \cite{gaunt chi square} used Stein's method to obtain explicit error bounds for Pearson's statistic, measured using smooth test functions. These bounds will be needed in the sequel, and are recorded here.

Let $C_b^k(\mathbb{R}^+)$ denote the class of bounded
functions $h : \mathbb{R}^+\rightarrow\mathbb{R}$ for which $h^{(k)}$ exists and derivatives up to $k$-th order are bounded. Let $r\geq2$ and $p_*:=\min_{1\leq j\leq r}p_j$. Suppose $np_*\geq 1$.
 Then, the following bounds were obtained in \cite{gaunt chi square}. For $h \in C_b^5(\mathbb{R}^+)$, 
\begin{align}\label{pearbound1}
|\mathbb{E} [h(\chi^2)] - \chi^2_{(r-1)} h | &\leq \frac{4}{(r+1)n}\bigg(\sum_{j=1}^r\frac{1}{\sqrt{p_j}}\bigg)^2\{19\|h\|+366\|h'\|+2016\|h''\|\nonumber\\
&\quad+5264\|h^{(3)}\|+106\,965\|h^{(4)}\|+302\,922\|h^{(5)}\|\},
\end{align}
and, for $h \in C_b^2(\mathbb{R}^+)$,
\begin{equation}\label{pearbound01}
|\mathbb{E} [h(\chi^2)] - \chi^2_{(r-1)} h | \leq  \frac{24}{(r+1)\sqrt{n}}\{3\|h\|+23\|h'\|+42\|h''\|\}\sum_{j=1}^r\frac{1}{\sqrt{p_j}},
\end{equation}
where $\chi_{(r-1)}^2h$ denotes the expectation $\mathbb{E}[h(Y_{r-1})]$ for $Y_{r-1}\sim \chi_{(r-1)}^2$. Here and throughout the paper, we write $\|\cdot\|$ for the usual supremum norm $\|\cdot\|_\infty$ of a real-valued function. The bound (\ref{pearbound1}) achieves the optimal $O(n^{-1})$ rate, although when the constants are large compared to $n$ the bound (\ref{pearbound01}) may give the
smaller numerical bound. The bounds (\ref{pearbound1}) and (\ref{pearbound01}) have an optimal dependence on $p_*$, since, for a fixed number of classes $r$, both bounds tend to zero if and only if $np_*\rightarrow\infty$, which is precisely the condition under which $\chi^2\rightarrow_d \chi_{(r-1)}^2$ (see \cite{gn96}). The bounds (\ref{pearbound1}) and (\ref{pearbound01}) are the first in the literature on Pearson's statistic to decay to zero if and only if $np_*\rightarrow\infty$, and in the case of (\ref{pearbound1}) achieve the optimal $O(n^{-1})$ rate for all $r\geq2$. 


In this paper, we generalise 
the bounds of \cite{gaunt chi square} to the family of power divergence statistics $T_\lambda$, $\lambda>-1$, the largest subclass of the power divergence family for which finite sample bounds can be given when measured using smooth test functions, as in (\ref{pearbound1}) and (\ref{pearbound01}). 
Indeed, it can be seen from (\ref{powerdiv}) and (\ref{klstat}) that, for $\lambda\leq-1$, we have $T_\lambda=\infty$ if any of the observed frequencies $U_1,\ldots,U_r$ are zero, meaning that the expectation $\mathbb{E}[h(T_\lambda)]$ is undefined. The bounds in Theorem \ref{thm1} are of the optimal $n^{-1}$ order. Specialising to the case $\lambda=0$ of the likelihood ratio statistic, our $O(n^{-1})$ bound improves on the $O(n^{-1/2})$ rate (and has a better dependence on the other model parameters) of \cite{ar20} that was given in a more general setting than that of categorical data considered in this paper. 
Our results also complement a work in preparation of \cite{gr friedman} in which Stein's method is used to obtained order $O(n^{-1})$ bounds for the chi-square approximation of Friedman's statistic.  
In Theorem \ref{thm2}, we provide sub-optimal $O(n^{-1/2})$ bounds which may yield smaller numerical bounds for small sample sizes $n$. 

Like the bounds of \cite{gaunt chi square} for Pearson's statistic, for all fixed $\lambda>-1$ and $r\geq2$, the bounds of Theorems \ref{thm1} and \ref{thm2} enjoy the property of decaying to zero if and only if $np_*\rightarrow\infty$. If $r\geq2$ is fixed and we allow $\lambda$ to vary with $n$, then the bounds of Theorems \ref{thm1} and \ref{thm2} tend to zero if and only if $np_*/\lambda^2\rightarrow\infty$, which is again an optimal condition (see Remark \ref{remconds}). The dependence of our bounds on $r$ is also respectable.   The excellent dependence of the bounds of Theorem \ref{thm2} on all parameters allows us to prescribe simple conditions under which the chi-square approximation is valid, when the quantities $\lambda$, $r$ and $p_1,\ldots,p_r$ may vary with $n$; see Remark \ref{remconds}. As already discussed, some of these conditions are optimal, and in parameter regimes in which we have not been able to obtain optimal conditions we conjecture what these optimal conditions are; see Remark \ref{remconds}, again. 

It is perhaps not a priori obvious that the $O(n^{-1})$ rate of our main bounds would hold for all power divergence statistics with $\lambda>-1$. For example, simulation studies have shown that for small samples $n$ the likelihood ratio statistic ($\lambda=0$) is less accurate at approximating the limiting $\chi_{(r-1)}^2$ distribution than is Pearson's statistic ($\lambda=1$) (see, for example, \cite{c76,l78}). Our results show, however, that at least when measured using smooth test functions, the actual rate of convergence is the same for large $n$ if the number of classes $r$ is fixed. If the number of classes $r$ is allowed to grow with $n$, then Pearson's statistic will converge faster, though (see Remark \ref{remconds}). It is also important to note that our assumption of smooth test functions is essential for the purpose of obtaining $O(n^{-1})$ convergence rates that hold for all $r\geq2$ and all $\lambda>-1$. For example, in the case $r=3$, convergence rates in the Kolmogorov metric can be no faster than $O(n^{-3/4}\log\log n)$; see \cite[Remark 3]{asylbekov}.

The rest of the paper is organised as follows. In Section \ref{sec2}, we state our main results. Theorem \ref{thm0} provides analogues of the bounds (\ref{pearbound1}) and (\ref{pearbound01}) for Pearson's statistic that hold for wider classes of test functions at the expense of a worse dependence on the parameter $r$. Our main result, Theorem \ref{thm1}, provides $O(n^{-1})$ bounds to quantify the chi-square approximation of the power divergence statistics. Alternative $O(n^{-1/2})$ bounds are given in Theorem \ref{thm2}, and from one of these bounds we extract a Kolmogorov distance bound (Corollary \ref{cor1}). We end Section \ref{sec2} with several remarks discussing the bounds. In Section \ref{sec3}, we present several preliminary lemmas that are needed for the proofs of our main results. We prove the main results in Section \ref{sec4}. In Appendix \ref{appchi}, we present some basics of Stein's method for chi-square approximation and use it to give a short proof of Theorem \ref{thm0}. Finally, in Appendix \ref{appa}, we prove a technical lemma from Section \ref{sec3}.

\section{Main results}\label{sec2}

We first present a theorem which complements the main result of \cite{gaunt chi square} by giving explicit error bounds for the chi-square approximation of Pearson's statistic that hold for larger classes of function than those used by \cite{gaunt chi square}. The proof involves a minor modification of the proof of the bounds (\ref{pearbound1}) and (\ref{pearbound01}) of \cite{gaunt chi square} and the details are given in Appendix \ref{appchi}.

Before stating the theorem, we introduce some notation. We let $p_*:=\mathrm{min}_{1\leq j\leq r}p_j$. Let $C_b^{j,k}(\mathbb{R}^+)$, $j\leq k$, denote the class of
functions $h : \mathbb{R}^+\rightarrow\mathbb{R}$ for which $h^{(k)}$ exists and derivatives of order $j,j+1,\ldots,k$ are bounded. Note that $C_b^k(\mathbb{R}^+)\subset C_b^{j,k}(\mathbb{R}^+)$ for $j\geq1$.


\begin{theorem}\label{thm0}Let $(U_1, \ldots, U_r)$ be the multinomial vector of $n$ observed counts, where $r \geq 2$, and suppose that $np_*\geq 1$. Let $\chi^2$ be Pearson's chi-square statistic as defined in (\ref{pearsonstatistic}).  Then, for $h\in C_b^{1,5}(\mathbb{R}^+)$, 
\begin{align}\label{pearbound1bb}
|\mathbb{E} [h(\chi^2)] - \chi^2_{(r-1)} h | &\leq \frac{1}{(r+1)^{1/2}n}\bigg(\sum_{j=1}^r\frac{1}{\sqrt{p_j}}\bigg)^2\{122\|h'\|+1970\|h''\|\nonumber\\
&\quad+6943\|h^{(3)}\|+12\,731\|h^{(4)}\|+643\,710\|h^{(5)}\|\},
\end{align}
and, for $h\in C_b^{1,2}(\mathbb{R}^+)$,
\begin{equation}\label{pearbound01bb}
|\mathbb{E} [h(\chi^2)] - \chi^2_{(r-1)} h | \leq  \frac{1}{\sqrt{(r+1)n}}\{115\|h'\|+536\|h''\|\}\sum_{j=1}^r\frac{1}{\sqrt{p_j}}.
\end{equation}
Also, for $h\in C_b^{2,5}(\mathbb{R}^+)$,
\begin{align}\label{pearbound1cc}
|\mathbb{E} [h(\chi^2)] - \chi^2_{(r-1)} h | &\leq \frac{1}{n}\bigg(\sum_{j=1}^r\frac{1}{\sqrt{p_j}}\bigg)^2\{19\|h''\|+206\|h^{(3)}\|\nonumber\\
&\quad+545\|h^{(4)}\|+161\,348\|h^{(5)}\|\},
\end{align}
and, for $h \in C_b^{2,2}(\mathbb{R}^+)$,
\begin{equation}\label{pearbound01cc}
|\mathbb{E} [h(\chi^2)] - \chi^2_{(r-1)} h | \leq  \frac{238\|h''\|}{\sqrt{n}}\sum_{j=1}^r\frac{1}{\sqrt{p_j}}.
\end{equation}
\end{theorem}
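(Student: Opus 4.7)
The strategy is to reuse the Stein's method argument for Pearson's statistic from \cite{gaunt chi square} essentially unchanged, and to upgrade only the bounds on the solution of the chi-square Stein equation. Recall the scheme: one introduces the Stein operator $\mathcal{A}f(x)=2xf''(x)+(r-1-x)f'(x)$ characterising $Y\sim\chi^2_{(r-1)}$, solves the Stein equation $\mathcal{A}f=h-\chi^2_{(r-1)}h$ for the given test function $h$, and bounds $|\mathbb{E}[h(\chi^2)]-\chi^2_{(r-1)}h|=|\mathbb{E}[\mathcal{A}f(\chi^2)]|$ by a Taylor expansion exploiting the multinomial structure of $\chi^2$. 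The resulting estimate is a linear combination of $\|f^{(k)}\|$ for $k$ up to $5$ (for the $O(n^{-1})$ bounds) or up to $2$ (for the $O(n^{-1/2})$ bounds), with prefactors of the form $n^{-1}\bigl(\sum_j p_j^{-1/2}\bigr)^2$ or $n^{-1/2}\sum_j p_j^{-1/2}$.

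First, I would extract from \cite{gaunt chi square} the explicit linear combinations of the $\|f^{(k)}\|$ that bound $|\mathbb{E}[h(\chi^2)]-\chi^2_{(r-1)}h|$; these estimates of the probabilistic error do not depend on the regularity of $h$ and may be used as a black box. Second, I would replace the Stein solution bounds used in \cite{gaunt chi square}, which express $\|f^{(k)}\|$ through $\|h\|,\|h'\|,\ldots,\|h^{(k-1)}\|$, by sharper bounds that start at $\|h^{(j)}\|$ for $j=1$ or $j=2$. Such refinements are obtained by integration by parts in the explicit integral representation of $f$: each integration by parts trades a polynomial weight for a derivative of $h$ and, crucially in the chi-square setting, introduces a multiplicative factor of order $(r+1)^{1/2}$. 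This explains the $(r+1)^{1/2}$-per-derivative degradation of the overall prefactor on passing from (\ref{pearbound1})--(\ref{pearbound01}) to (\ref{pearbound1bb})--(\ref{pearbound01bb}) and then to (\ref{pearbound1cc})--(\ref{pearbound01cc}). Recombining these improved Stein solution bounds with the unchanged distributional estimate and collecting terms yields the four inequalities of the theorem.

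The main obstacle is bookkeeping: carrying the new Stein solution constants through the Taylor-expansion error terms inherited from \cite{gaunt chi square}, and checking that the $(r+1)^{1/2}$-per-derivative trade-off holds uniformly in $r\geq 2$ and in $n$ under the standing assumption $np_*\geq 1$. Since the probabilistic input is inherited wholesale, the modification is indeed minor, and the detailed verification can be relegated to Appendix \ref{appchi}.
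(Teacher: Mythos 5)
Your proposal is correct and is essentially the paper's own proof in Appendix \ref{appchi}: there the intermediate estimates (\ref{int111}) and (\ref{int222}) from \cite{gaunt chi square}, which bound $|\mathbb{E}[h(\chi^2)]-\chi^2_{(r-1)}h|$ by linear combinations of $\|f^{(k)}\|$, are used as a black box, and the four inequalities follow by substituting Stein-solution bounds of the three types $\|f^{(k)}\|\leq\frac{2}{k}\|h^{(k)}\|$, $\|f^{(k)}\|\lesssim(r+1)^{-1/2}\|h^{(k-1)}\|$ and $\|f^{(k)}\|\lesssim(r+1)^{-1}\big(\|h^{(k-1)}\|+\|h^{(k-2)}\|\big)$ --- exactly the $(r+1)^{1/2}$-per-derivative trade-off you describe. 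The only cosmetic differences are that the paper cites these solution estimates from \cite{luk}, \cite{gaunt thesis} and \cite{gaunt chi square} rather than re-deriving them by integration by parts, and that the wider-class bounds are not ``sharper'' but rather weaker in their $r$-dependence while requiring less regularity of $h$, as your own accounting of the degradation already makes clear.
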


The following weak convergence theorem for smooth test functions with a bound of order $n^{-1}$ for the $\chi_{(r-1)}^2$ approximation of the power divergence statistic $T_\lambda$ (which holds for all $\lambda>-1$ and $r\geq2$) is the main result of this paper.

\begin{theorem}\label{thm1}Let $(U_1, \ldots, U_r)$ represent the multinomial vector of $n$ observed counts, where $r \geq 2$. For $\lambda>-1$, let $T_\lambda$ be the power divergence statistic as defined in (\ref{powerdiv}). Suppose that $np_*\geq r$. If $\lambda\geq3$, we also suppose that $np_*\geq 2(\lambda-3)^2$. Then, for $h \in C_b^5(\mathbb{R}^+)$,
\begin{align}\label{pdbound}
|\mathbb{E} [h(T_\lambda)] - \chi^2_{(r-1)} h | &\leq \frac{1}{n}\bigg(\sum_{j=1}^r\frac{1}{p_j}\bigg)\bigg\{\frac{4r}{r+1}\{19\|h\|+366\|h'\|+2016\|h''\|\nonumber\\
&\quad+5264\|h^{(3)}\|+106\,965\|h^{(4)}\|+302\,922\|h^{(5)}\|\}\nonumber\\
&\quad+|\lambda-1|r\{2\|h'\|+202\|h''\|+819\|h^{(3)}\|+100\,974\|h^{(4)}\|\}\nonumber\\
&\quad+\frac{19}{9}(\lambda-1)^2\|h''\|+\frac{|(\lambda-1)(\lambda-2)|(12\lambda+13)}{6(\lambda+1)}\|h'\|\bigg\}.
\end{align}

Also, under the weaker assumption that $h\in C_b^{1,5}(\mathbb{R}^+)$,
\begin{align}\label{pdbound22}
|\mathbb{E} [h(T_\lambda)] - \chi^2_{(r-1)} h | &\leq \frac{1}{n}\bigg(\sum_{j=1}^r\frac{1}{p_j}\bigg)\bigg\{\frac{r}{\sqrt{r+1}}\{122\|h'\|+1970\|h''\|\nonumber\\
&\quad+6943\|h^{(3)}\|+12\,731\|h^{(4)}\|+643\,710\|h^{(5)}\|\}\nonumber\\
&\quad+|\lambda-1|r\{2\|h'\|+202\|h''\|+819\|h^{(3)}\|+100\,974\|h^{(4)}\|\}\nonumber\\
&\quad+\frac{19}{9}(\lambda-1)^2\|h''\|+\frac{|(\lambda-1)(\lambda-2)|(12\lambda+13)}{6(\lambda+1)}\|h'\|\bigg\}.
\end{align}
\end{theorem}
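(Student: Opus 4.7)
The strategy is to reduce the chi-square approximation of $T_\lambda$ to that of Pearson's statistic $\chi^2$ via the triangle inequality
\[
\bigl|\mathbb{E}[h(T_\lambda)] - \chi^2_{(r-1)} h\bigr|
\leq
\bigl|\mathbb{E}[h(T_\lambda)] - \mathbb{E}[h(\chi^2)]\bigr|
+
\bigl|\mathbb{E}[h(\chi^2)] - \chi^2_{(r-1)} h\bigr|.
\]
The second term is controlled by the pre-existing Pearson bound (\ref{pearbound1}) of \cite{gaunt chi square} for the proof of (\ref{pdbound}), and by the $C_b^{1,5}$ bound (\ref{pearbound1bb}) of Theorem \ref{thm0} for the proof of (\ref{pdbound22}), after applying Cauchy--Schwarz in the form $(\sum_j p_j^{-1/2})^2 \leq r\sum_j p_j^{-1}$. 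This directly produces the $\frac{4r}{r+1}\{\cdots\}$ and $\frac{r}{\sqrt{r+1}}\{\cdots\}$ blocks in the stated bounds, which exhaust the statement when $\lambda = 1$ (where $T_\lambda = \chi^2$).

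To bound the remaining difference I set $V_j := U_j - np_j$ and use $\sum_j V_j = 0$ to rewrite $T_\lambda = \frac{2}{\lambda(\lambda+1)}\sum_j np_j\bigl[(1 + V_j/np_j)^{\lambda+1} - 1\bigr]$. A Taylor expansion of $(1+y)^{\lambda+1}$, combined with the arithmetic identities $\frac{2}{\lambda(\lambda+1)}\binom{\lambda+1}{2} = 1$, $\frac{2}{\lambda(\lambda+1)}\binom{\lambda+1}{3} = \frac{\lambda-1}{3}$ and $\frac{2}{\lambda(\lambda+1)}\binom{\lambda+1}{4} = \frac{(\lambda-1)(\lambda-2)}{12}$, yields the decomposition
\[
T_\lambda - \chi^2 \;=\; \frac{\lambda-1}{3}\,S_3 \;+\; \frac{(\lambda-1)(\lambda-2)}{12}\,S_4 \;+\; R_\lambda, \qquad S_k := \sum_{j=1}^r \frac{V_j^k}{(np_j)^{k-1}},
\]
with $R_\lambda$ a tail remainder whose prefactor vanishes when $\lambda \in \{1,2\}$. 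The hypotheses $np_* \geq r$ and (for $\lambda \geq 3$) $np_* \geq 2(\lambda-3)^2$ are exactly what is needed to keep the combinatorial factors $\binom{\lambda+1}{k}(V_j/np_j)^k$ small enough, on both typical and atypical events, that $R_\lambda$ contributes at the target $O(n^{-1})$ scale.

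Applying Taylor's theorem to $h$ gives $h(T_\lambda) - h(\chi^2) = h'(\chi^2)(T_\lambda - \chi^2) + \tfrac{1}{2}h''(\eta)(T_\lambda - \chi^2)^2$. The quadratic piece is bounded by $\tfrac12\|h''\|\mathbb{E}[(T_\lambda-\chi^2)^2]$: substituting the decomposition above and invoking multinomial moment bounds for $V_j^k$ (supplied by the preliminary lemmas of Section \ref{sec3}) produces the $\tfrac{19}{9}(\lambda-1)^2\|h''\|$ contribution, the leading $(\lambda-1)^2$ coming from $(S_3/3)^2$. The linear piece $\mathbb{E}[h'(\chi^2)(T_\lambda-\chi^2)]$ is then split term by term along the same decomposition: the $S_4$ and $R_\lambda$ contributions, being of mean order $n^{-1}$, are handled by the crude estimate $\|h'\|\,\mathbb{E}|\cdot|$ and contribute to the $|\lambda-1|r\{\cdots\}$ block and the $\frac{|(\lambda-1)(\lambda-2)|(12\lambda+13)}{6(\lambda+1)}\|h'\|$ term, where the factor $(\lambda+1)^{-1}$ emerges from estimating the geometric tail of the Taylor expansion.

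The main obstacle is the cubic piece $\mathbb{E}[h'(\chi^2) S_3]$: since $|V_j|^3$ has mean of order $(np_j)^{3/2}$, the naive bound $\|h'\|\,\mathbb{E}|S_3|$ only yields $O(n^{-1/2})$, which is insufficient. To obtain $O(n^{-1})$ I would exploit the exact moment identity $\mathbb{E}[V_j^3] = np_j(1-p_j)(1-2p_j) = O(np_j)$, a full $\sqrt{np_j}$ better than the pointwise size. Concretely, I would further Taylor-expand $h'(\chi^2)$ around the $V_j$-independent $h'(\chi^2_{-j})$, where $\chi^2_{-j} := \chi^2 - V_j^2/(np_j)$, so that conditional expectations extract $\mathbb{E}[V_j^3]$ as the leading factor for each summand of $S_3$, giving $O(1/n)$, with the Taylor remainder involving $V_j^5/(np_j)^3$ already at the target order by standard multinomial moment estimates. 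Assembling the estimates from all three steps and inserting the Pearson bound (\ref{pearbound1}) (respectively (\ref{pearbound1bb})) for the residual chi-square approximation error yields (\ref{pdbound}) (respectively (\ref{pdbound22})).
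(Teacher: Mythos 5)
Your overall architecture is the same as the paper's: decompose $T_\lambda$ around Pearson's statistic, absorb the chi-square approximation error of $\chi^2$ via (\ref{pearbound1}) (resp.\ (\ref{pearbound1bb})), control the Taylor remainder of the power function with moment bounds under the hypotheses $np_*\geq r$ and $np_*\geq 2(\lambda-3)^2$, and finish with $\big(\sum_j p_j^{-1/2}\big)^2\leq r\sum_j p_j^{-1}$. You also correctly identify the cubic term $\mathbb{E}\big[h'(\chi^2)\sum_j S_j^3/\sqrt{np_j}\big]$ as the main obstacle. But your treatment of it fails, on two counts. First, the leave-one-out conditioning does not do what you claim: in the multinomial model $\sum_k U_k=n$, so $U_j$ (hence $V_j=U_j-np_j$ and $S_j$) is a \emph{deterministic} function of $(U_k)_{k\neq j}$, and $\chi^2_{-j}=\chi^2-S_j^2$ is likewise a function of the same variables. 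Conditioning on the other cells returns $S_j^3$ unchanged, so no factor $\mathbb{E}[V_j^3]$ is extracted; there is no approximate independence to exploit here. Second, even granting some factorisation, your Taylor remainder is bounded by $\|h''\|\,\mathbb{E}[|S_j|^5]/\sqrt{np_j}\asymp (np_j)^{-1/2}$ per cell (you cannot use the signed moment $\mathbb{E}[V_j^5]=O((np_j)^2)$ because the random evaluation point in $h''(\eta)$ blocks the cancellation), and summing gives $\sum_j(np_j)^{-1/2}$, which is of order $n^{-1/2}$, not the target $n^{-1}\sum_j p_j^{-1}$: with uniform probabilities this is $r^{3/2}n^{-1/2}$ versus $r^2 n^{-1}$, and under the standing assumption $n\geq r^2$ the former strictly dominates. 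So your claim that the remainder is ``already at the target order'' is false, and the proposal as written only proves an $O(n^{-1/2})$ bound, i.e.\ essentially Theorem \ref{thm2}.

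The paper closes precisely this gap by importing a ready-made estimate from \cite{gaunt chi square} (p.\ 747): for each $j$,
\begin{equation*}\bigg|\mathbb{E}\bigg[S_j^3h'\bigg(\sum_{k=1}^rS_k^2\bigg)\bigg]\bigg|\leq\big\{2\|h'\|+202\|h''\|+819\|h^{(3)}\|+100\,974\|h^{(4)}\|\big\}\sum_{k=1}^r\frac{1}{\sqrt{np_k}},
\end{equation*}
which is $O(n^{-1/2})$ and, multiplied by the prefactor $|\lambda-1|(np_j)^{-1/2}/3$ and summed, yields the $O(n^{-1})$ block $|\lambda-1|r\{2\|h'\|+202\|h''\|+819\|h^{(3)}\|+100\,974\|h^{(4)}\|\}$ in (\ref{pdbound}). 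The mechanism behind that estimate is not a leave-one-out argument but antisymmetry plus Stein's method: for $g(\boldsymbol{s})=\sum_j p_j^{-1/2}s_j^3h'(\sum_k s_k^2)$ one has $\mathbb{E}[g(\boldsymbol{Z})]=0$ for the matching multivariate normal $\boldsymbol{Z}$, and $|\mathbb{E}[g(\boldsymbol{S})]-\mathbb{E}[g(\boldsymbol{Z})]|$ is bounded at rate $n^{-1/2}$ by multivariate normal approximation with polynomially growing test functions \cite{gaunt normal}. Note that this is exactly where $\|h^{(3)}\|$ and $\|h^{(4)}\|$ enter the theorem, forcing the hypothesis $h\in C_b^5(\mathbb{R}^+)$; your sketch consumes only $\|h''\|$ for this term, which is a telltale sign that it cannot reproduce the required cancellation. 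A secondary, repairable imprecision: you apply the second-order Taylor expansion of $h$ to the full increment $T_\lambda-\chi^2$, so $\mathbb{E}[(T_\lambda-\chi^2)^2]$ contains cross terms (e.g.\ between the cubic term and the tail) carrying higher powers of $\lambda$ than the stated $\tfrac{19}{9}(\lambda-1)^2\|h''\|$; the paper avoids this by first peeling off the tail $R_\lambda(\boldsymbol{S})$ with a first-order expansion (giving the $\|h'\|$ term with factor $|(\lambda-1)(\lambda-2)|(12\lambda+13)/(6(\lambda+1))$, via Lemma \ref{lem1} and the binomial moment bounds) and only then expanding $h\big(\chi^2+\tfrac{\lambda-1}{3}\sum_jS_j^3/\sqrt{np_j}\big)$ to second order, with the quadratic term controlled by Lemma \ref{sj3sk3}.
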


If the constants are large compared to $n$, the next result may give smaller numerical bounds. 

\begin{theorem}\label{thm2}Let $\lambda>-1$, $r\geq2$ and $np_*\geq 2$.  If $\lambda\geq2$, we also suppose that $np_*\geq 2(\lambda-2)^2$.  Then, for $h \in C_b^2(\mathbb{R}^+)$,
\begin{align}
&|\mathbb{E} [h(T_\lambda)] - \chi^2_{(r-1)} h | \nonumber\\
\label{pdbound0}&\quad\leq \bigg(\frac{24}{r+1}\{3\|h\|+23\|h'\|+42\|h''\|\}+\frac{|\lambda-1|(4\lambda+7)}{\lambda+1}\|h'\|\bigg)\sum_{j=1}^r\frac{1}{\sqrt{np_j}}.
\end{align}
Also, under the weaker assumption that $h\in C_b^{1,2}(\mathbb{R}^+)$,
\begin{align}
&|\mathbb{E} [h(T_\lambda)] - \chi^2_{(r-1)} h | \nonumber\\
\label{pdbound022}&\quad\leq \bigg(\frac{1}{\sqrt{r+1}}\{115\|h'\|+536\|h''\|\}+\frac{|\lambda-1|(4\lambda+7)}{\lambda+1}\|h'\|\bigg)\sum_{j=1}^r\frac{1}{\sqrt{np_j}}.
\end{align}
\end{theorem}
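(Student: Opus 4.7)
The plan is to proceed by triangle inequality, separating the error into (i) the chi-square approximation error for Pearson's statistic $\chi^2=T_1$, which is handled by Theorem \ref{thm0}, and (ii) an error measuring how far $T_\lambda$ sits from $\chi^2$. Concretely, write
\[ |\mathbb{E}[h(T_\lambda)]-\chi^2_{(r-1)}h|\le |\mathbb{E}[h(T_\lambda)]-\mathbb{E}[h(\chi^2)]|+|\mathbb{E}[h(\chi^2)]-\chi^2_{(r-1)}h|. \]
Bound (\ref{pearbound01}) handles the second summand for the $C_b^2$ statement (\ref{pdbound0}), and bound (\ref{pearbound01bb}) handles it for the $C_b^{1,2}$ statement (\ref{pdbound022}); in both cases the result matches exactly the first summand of the stated bound.

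For the first summand I would use the Lipschitz estimate $|h(T_\lambda)-h(\chi^2)|\le\|h'\|\,|T_\lambda-\chi^2|$, reducing the task to controlling $\mathbb{E}|T_\lambda-\chi^2|$. Setting $x_j:=(U_j-np_j)/(np_j)$ and $g_\lambda(x):=\frac{2}{\lambda(\lambda+1)}[(1+x)^{\lambda+1}-(1+x)]$ so that $T_\lambda=\sum_{j=1}^r np_j\,g_\lambda(x_j)$, a short computation gives $g_\lambda(0)=0$, $g_\lambda'(0)=2/(\lambda+1)$, $g_\lambda''(0)=2$, and $g_\lambda'''(t)=2(\lambda-1)(1+t)^{\lambda-2}$. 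Taylor's theorem with integral remainder then yields
\[ g_\lambda(x)=\frac{2x}{\lambda+1}+x^2+(\lambda-1)\int_0^x(x-t)^2(1+t)^{\lambda-2}\,dt, \]
and summing over $j$ --- using $\sum_j np_j x_j=0$ (from $\sum_j U_j=n$) and $\sum_j np_j x_j^2=\chi^2$ --- produces the clean identity
\[ T_\lambda-\chi^2=(\lambda-1)\sum_{j=1}^r np_j\int_0^{x_j}(x_j-t)^2(1+t)^{\lambda-2}\,dt. \]

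It then remains to show that, under the stated hypotheses, each summand satisfies
\[ \mathbb{E}\left|np_j\int_0^{x_j}(x_j-t)^2(1+t)^{\lambda-2}\,dt\right|\le \frac{4\lambda+7}{\lambda+1}\cdot\frac{1}{\sqrt{np_j}}, \]
which, together with the Lipschitz estimate and the prefactor $(\lambda-1)$ above, yields the required constant $|\lambda-1|(4\lambda+7)/(\lambda+1)$ after summation over $j$. I would carry out this bound using the preliminary lemmas of Section \ref{sec3}, which are designed to control integral functionals of multinomial deviations in terms of binomial moment estimates like $\mathbb{E}|U_j-np_j|^k\lesssim(np_j)^{k/2}$.

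The main obstacle will be controlling the factor $(1+t)^{\lambda-2}$ across its two opposite difficult regimes. For $\lambda<1$ this factor blows up near $t=-1$ (i.e.\ when $U_j$ is $0$ or very small), but the compensating factor $(x_j-t)^2$ keeps the integral finite exactly in the range $\lambda>-1$ --- which is why the theorem is restricted to this range and also explains the $1/(\lambda+1)$ in the final constant. For $\lambda\ge 2$, conversely, $(1+t)^{\lambda-2}$ grows on positive deviations, and the side hypothesis $np_*\ge 2(\lambda-2)^2$ is what forces enough concentration of $U_j$ near $np_j$ to tame those tails. Balancing both regimes with explicit constants to extract the sharp factor $(4\lambda+7)/(\lambda+1)$ will require a sign-of-$x_j$ case split paired with the binomial moment bounds from Section \ref{sec3}.
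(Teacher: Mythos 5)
Your proposal is correct and follows essentially the same route as the paper: there one writes $T_\lambda=\chi^2+R_\lambda(\boldsymbol{S})$, applies the triangle inequality together with the Pearson bounds (\ref{pearbound01}) and (\ref{pearbound01bb}), and controls $\|h'\|\mathbb{E}|R_\lambda(\boldsymbol{S})|$ by precisely the second-order Taylor analysis you sketch --- your integral-remainder identity is the content of Lemma \ref{lem2}, whose cases ($\lambda=0$; $\lambda\geq2$ using Ahle's moment bound (\ref{binbound1}) under $np_*\geq2(\lambda-2)^2$; $\lambda\in(-1,2)\setminus\{0\}$) are combined with the binomial moment bounds (\ref{3mombound}) and (\ref{6mombound}) to yield the unified constant $|\lambda-1|(4\lambda+7)/(\lambda+1)$. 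The only cosmetic difference is that the paper organises the remainder estimate by splitting on ranges of $\lambda$ via pointwise Taylor inequalities rather than on the sign of $x_j$, but the resulting estimates are the same.
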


In the following corollary we deduce a Kolmogorov distance bound from (\ref{pdbound0}) by applying a basic technique (see \cite[p.\ 48]{chen}) for converting smooth test function bounds into Kolmogorov distance bounds. As this technique is fairly crude, our resulting bound has a suboptimal dependence on $n$, although it does inherit the desirable property of decaying to zero if and only if $np_*\rightarrow\infty$, for fixed $r$ and $\lambda$. This is the first Kolmogorov distance bound for the chi-square approximation of the statistic $T_\lambda$ to enjoy this property.


\begin{corollary}\label{cor1} Let $\lambda>-1$, $r\geq2$ and $np_*\geq 2$.  If $\lambda\geq2$, we additionally suppose that $np_*\geq 2(\lambda-2)^2$. Also, let $Y_{r-1}\sim\chi_{(r-1)}^2$.  Then
\begin{align*}&\sup_{z>0}|\mathbb{P}(T_\lambda\leq z)-\mathbb{P}(Y_{r-1}\leq z)| \\
&\leq  \begin{cases} \displaystyle \frac{1}{(np_*)^{1/10}}\bigg\{8+\bigg(21+\frac{|\lambda-1|(4\lambda+7)r}{52(\lambda+1)}\bigg)\frac{1}{(np_*)^{1/5}}+\frac{72}{(np_*)^{2/5}}\bigg\}, & \:  r=2, \\
\displaystyle \frac{1}{(np_*)^{1/6}}\bigg\{19+\bigg(44+\frac{|\lambda-1|(4\lambda+7)r}{25(\lambda+1)}\bigg)\frac{1}{(np_*)^{1/6}}+\frac{72}{(np_*)^{1/3}}\bigg\}, & \:  r=3, \\
\displaystyle \frac{1}{(r-3)^{1/3}(np_*)^{1/6}}\bigg\{13+\bigg(37+\frac{|\lambda-1|(4\lambda+7)r}{30(\lambda+1)}\bigg)\frac{(r-3)^{1/6}}{(np_*)^{1/6}}\\
\displaystyle+\frac{72(r-3)^{1/3}}{(np_*)^{1/3}}\bigg\}, & \:   r\geq4. \end{cases}
\end{align*}
\end{corollary}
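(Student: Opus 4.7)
The plan is to apply the standard smoothing device of \cite[p.~48]{chen} to the smooth test function bound \eqref{pdbound0}. For each $z > 0$ and smoothing parameter $\epsilon > 0$, I would introduce two functions $h_{z,\epsilon}^{\pm} \in C_b^2(\mathbb{R}^+)$ sandwiching the indicator of $(0,z]$: the upper approximant $h_{z,\epsilon}^+$ equal to $1$ for $x \leq z$ and $0$ for $x \geq z+\epsilon$, with a smooth transition on $[z, z+\epsilon]$ (for instance $x \mapsto \psi((x-z)/\epsilon)$ for a fixed template $\psi$ satisfying $\psi(0)=1$, $\psi(1)=0$ and $\psi'(0)=\psi'(1)=\psi''(0)=\psi''(1)=0$), and $h_{z,\epsilon}^-$ defined analogously shifted by $\epsilon$ to the left. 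Such choices give $\|h_{z,\epsilon}^{\pm}\| \leq 1$, $\|(h_{z,\epsilon}^{\pm})'\| \leq C_1/\epsilon$ and $\|(h_{z,\epsilon}^{\pm})''\| \leq C_2/\epsilon^2$ for explicit universal constants $C_1, C_2$.

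From the sandwiching, $\mathbb{P}(T_\lambda \leq z) - \mathbb{P}(Y_{r-1} \leq z) \leq \mathbb{E}[h_{z,\epsilon}^+(T_\lambda)] - \mathbb{E}[h_{z,\epsilon}^+(Y_{r-1})] + \mathbb{P}(z < Y_{r-1} \leq z+\epsilon)$, and a symmetric inequality gives the lower bound. The first difference is controlled by \eqref{pdbound0} applied to $h_{z,\epsilon}^{\pm}$, and on its right-hand side I would bound $\sum_{j=1}^r 1/\sqrt{np_j} \leq r/\sqrt{np_*}$. The smoothing error $\mathbb{P}(z < Y_{r-1} \leq z+\epsilon)$ is handled through the chi-square density $f_{r-1}$. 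When $r = 2$, direct integration of $f_1(t) = (2\pi t)^{-1/2} e^{-t/2}$ yields a bound of order $\sqrt{\epsilon}$, reflecting the singularity at the origin. For $r \geq 3$ the density is bounded: $\|f_2\| = 1/2$ when $r = 3$, and for $r \geq 4$ the maximum is attained at $y = r-3$, yielding $\|f_{r-1}\|$ of order $(r-3)^{-1/2}$ by a Stirling-type estimate.

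It then remains to optimize over $\epsilon > 0$. The leading trade-off is between the smoothing error and the $\|h''\|$ contribution from \eqref{pdbound0}, which scales as $C_2 r/(\epsilon^2\sqrt{np_*})$. For $r = 2$, balancing $\sqrt{\epsilon}$ against $1/(\epsilon^2\sqrt{np_*})$ gives $\epsilon \asymp (np_*)^{-1/5}$ and the overall $(np_*)^{-1/10}$ rate; for $r \geq 3$, balancing $\epsilon\|f_{r-1}\|$ against the same term gives $\epsilon \asymp (np_*)^{-1/6}$ and the overall $(np_*)^{-1/6}$ rate, with the $(r-3)^{-1/3}$ factor in the $r \geq 4$ case arising directly from the $(r-3)^{-1/2}$ bound on $\|f_{r-1}\|$. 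The main obstacle will be the bookkeeping: tracking explicit constants through the sandwich, the application of \eqref{pdbound0}, and the density estimates so as to reproduce the precise numerical coefficients ($8$, $21$, $72$ for $r = 2$, and their analogues for $r = 3$ and $r \geq 4$), and in particular making the Stirling-type estimate for $\|f_{r-1}\|$ quantitative and uniform in $r \geq 4$. The argument itself is otherwise routine.
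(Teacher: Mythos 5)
Your proposal is correct and follows essentially the same route as the paper's proof: the paper smooths the indicator with a piecewise-quadratic $h_\alpha$ satisfying $\|h_\alpha\|=1$, $\|h_\alpha'\|=2/\alpha$, $\|h_\alpha''\|=4/\alpha^2$, applies (\ref{pdbound0}) with $\sum_{j=1}^r(np_j)^{-1/2}\leq r/\sqrt{np_*}$, bounds $\mathbb{P}(z\leq Y_{r-1}\leq z+\alpha)$ by $\sqrt{2\alpha/\pi}$, $\alpha/2$ and $\alpha/(2\sqrt{\pi(r-3)})$ for $r=2$, $r=3$ and $r\geq4$ (citing the density estimates of \cite[p.\ 754]{gaunt chi square} rather than rederiving them), and then selects $\alpha\asymp(np_*)^{-1/5}$ (for $r=2$) or $\alpha\asymp(r-3)^{1/6}(np_*)^{-1/6}$ (for $r\geq4$), which is precisely your balancing. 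One minor bookkeeping slip in your sketch: the $\|h''\|$ contribution carries no factor of $r$, since the $r$ from $\sum_{j}(np_j)^{-1/2}\leq r/\sqrt{np_*}$ is cancelled by the $24/(r+1)$ prefactor in (\ref{pdbound0}) (only the $\lambda$-term retains an explicit $r$), and it is this $r$-free scaling that yields the $(r-3)^{-1/3}$ decay you correctly predict in the $r\geq4$ case.
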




\begin{remark}[On the proofs and assumptions]
Our proofs of Theorems \ref{thm1} and \ref{thm2} involve writing the power divergence statistic $T_\lambda$ in the form $T_\lambda=\chi^2+R$, where $R$ is a ``small" remainder term. By this approach, to obtain our bounds on the quantity of interest $|\mathbb{E}[h(T_\lambda)]-\chi_{(r-1)}^2h|$, we make use of the bounds on $|\mathbb{E}[h(\chi^2)]-\chi_{(r-1)}^2h|$ given by (\ref{pearbound1}), (\ref{pearbound01}), (\ref{pearbound1bb}) and (\ref{pearbound01bb}). In the case $\lambda=1$, $T_\lambda=\chi^2$, meaning that $R=0$, which explains why our bounds reduce almost exactly to those bounds when $\lambda=1$, with the only difference being that at the end of our proof of Theorem \ref{thm1} we use the inequality $\big(\sum_{j=1}^rp_j^{-1/2}\big)^2\leq r\sum_{j=1}^rp_j^{-1}$ to obtain a compact final bound.  

Our approach of decomposing $T_\lambda=\chi^2+R$ means that we inherit large numerical constants from the bounds (\ref{pearbound1}), (\ref{pearbound01}), (\ref{pearbound1bb}) and (\ref{pearbound01bb}). However, as was the case in \cite{gaunt chi square}, our primary concern is simple final bounds with good dependence on the parameters $n$, $\lambda$, $r$ and $p_1,\ldots,p_r$. In proving Theorems \ref{thm1} and \ref{thm2}, we make some crude approximations but take care to ensure they do not affect the role of the parameters in the final bound.

In order to simplify the calculations and arrive at our desired compact final bounds, we made some assumptions. We discuss the assumptions for Theorem \ref{thm1}; similar comments apply to the assumptions of Theorem \ref{thm2}. The assumption $np_*\geq r$ ($\lambda\not=1$), $np_*\geq1$ ($\lambda=1$) is very mild (the bounds in Theorem \ref{thm1} are uninformative otherwise) and was made for the purpose of simplifying the calculations and to allow us to obtain a compact final bound (it allows the bound of Lemma \ref{sj3sk3} to be given in a compact form). This assumption is also natural, because we require that $np_*/r\rightarrow\infty$ in order for $T_{\lambda}\rightarrow_d \chi_{(r-1)}^2$ (see Remark \ref{remconds}). The assumption $np_*\geq 2(\lambda-3)^2$, $\lambda\geq3$, is also mild (our bound cannot be small unless this condition is met) and natural because a necessary condition for $T_{\lambda}\rightarrow_d \chi_{(r-1)}^2$ is that $np_*/\lambda^2\rightarrow\infty$ (see Remark \ref{remconds}). This assumption is needed to apply Lemma \ref{lem3.2} and, again, is useful in allowing a compact bound to be given. 
\end{remark}

\begin{remark}[The class of functions and IPMs] The bound (\ref{pdbound}) of Theorem \ref{thm1} is in a sense preferable to the bound (\ref{pdbound22}) because in the case $\lambda=1$ it has a better dependence on $r$.
 However, an advantage of the bound (\ref{pdbound22}) is that it holds for a wider class of test functions $h$ whilst having the same dependence on all parameters $n$, $\lambda$, $r$ and $p_1,\ldots,p_r$ if $\lambda\not=1$. We now see how this advantage plays out if the smooth test function bounds are converted into bounds involving integral probability metrics.

The bounds of Theorems \ref{thm0}--\ref{thm2} can be expressed in terms of convergence determining integral probability metrics (IPMs) \cite{gs02,z93} as follows. Let $\mathcal{H}_{q_1,q_2}=\{h\in C_b^{q_1,q_2}(\mathbb{R}^+)\,:\,\|h^{(q_1)}\|\leq1,\|h^{(q_1+1)}\|\leq1,\ldots,\|h^{(q_2-1)}\|\leq1,\|h^{(q_2)}\|\leq1\}$, where $h^{(0)}\equiv h$. Then we define the IPM between the laws of real-valued random variables $X$ and $Y$ by $d_{q_1,q_2}(\mathcal{L}(X),\mathcal{L}(Y))=\sup_{h\in\mathcal{H}_{q_1,q_2}}|\mathbb{E}[h(X)]-\mathbb{E}[h(Y)]|$. 
As an example, for $Y_{r-1}\sim\chi_{(r-1)}^2$, from (\ref{pdbound22}) we obtain the two-sided inequality
\begin{align}|\mathbb{E}[T_\lambda]-\mathbb{E}[Y_{r-1}]|\leq d_{1,5}(\mathcal{L}(T_\lambda),\chi_{(r-1)}^2)\leq \frac{1}{n}\bigg(\sum_{j=1}^r\frac{1}{p_j}\bigg)\bigg\{665\,476\sqrt{r}&\nonumber\\
\label{ipmineq}+101\,997|\lambda-1|r+\frac{19}{9}(\lambda-1)^2+\frac{|(\lambda-1)(\lambda-2)|(12\lambda+13)}{6(\lambda+1)}\bigg\},&
\end{align}
where the lower bound follows because $h(x)=x$ is in the class $\mathcal{H}_{1,5}$. If $r\geq2$, then $\mathbb{E}[T_1]=\mathbb{E}[\chi^2]=\mathbb{E}[Y_{r-1}]$, and it was shown by \cite[p.\ 451]{cr84} (here we have written their formula in a slightly different form) that, for $\lambda>-1$,
\begin{align}\label{momapprox}|\mathbb{E}[T_\lambda]-\mathbb{E}[Y_{r-1}]|&=\bigg|\sum_{j=1}^r\bigg\{\frac{(\lambda-1)}{np_j}\bigg[\frac{3\lambda-2}{12}-\frac{\lambda p_j}{2}+\frac{3\lambda+2}{12}p_j^2\bigg]\bigg\}+O(n^{-3/2})\bigg|.
\end{align}
It should be noted that $\mathbb{E}[T_{-1}]=\infty$, and that the $O(n^{-3/2})$ term in the asymptotic approximation for $|\mathbb{E}[T_\lambda]-\mathbb{E}[Y_{r-1}]|$ therefore blows up as $\lambda\downarrow-1$. We also note that comparing the lower and upper bounds in (\ref{ipmineq}) shows that the upper bound (\ref{ipmineq}) has a good dependence on all parameters;  $n$, $\lambda$, $r$ and $p_1,\ldots,p_r$. This is the subject of the next remark.
\end{remark}


\begin{remark}[The dependence of the bounds on the parameters and conditions for convergence]\label{remconds}
A basic necessary condition for $T_\lambda$ to converge in distribution to $\chi_{(r-1)}^2$  is that $\mathbb{E}[T_\lambda]\rightarrow\mathbb{E}[Y_{r-1}]$, for $Y_{r-1}\sim\chi_{(r-1)}^2$. Therefore, we can study the optimality of the bounds of Theorems \ref{thm1} and \ref{thm2} on the parameters $n$, $\lambda$, $r$, and $p_1,\ldots,p_r$ via comparison to the asymptotic approximation (\ref{momapprox}). We observe that the $O(n^{-1})$ rate of convergence  of the upper bounds in Theorem \ref{thm1} is optimal. Moreover, the dependence on $p_*$ of the form $(np_*)^{-1}$ is also best possible, as well as the the dependence on $\lambda$ of the form $\lambda^2/n$ for large $n$ and $\lambda$. 
The bounds in Theorem \ref{thm2} are of the sub-optimal order $O(n^{-1/2})$, but otherwise also have the correct dependence on $p_*$ and $\lambda$, because they tend to zero if and only if $np_*\rightarrow\infty$ (with fixed $r$ and $\lambda$) and $n/\lambda^2\rightarrow\infty$ (with fixed $r$ and $p_*$).

Let us now consider the dependence of the bounds of Theorems \ref{thm1} and \ref{thm2} on $r$. To do this, we consider the case of uniform cell classification probabilities, $p_1=\cdots=p_r=1/r$, and suppose $\lambda\not=1$ is fixed. Then for large $r$, $|\mathbb{E}[T_\lambda]-\mathbb{E}[Y_{r-1}]|\rightarrow0$ if and only if $n/r^2\rightarrow\infty$, except when $\lambda=2/3$, in which case $|\mathbb{E}[T_{2/3}]-\mathbb{E}[Y_{r-1}]|\rightarrow0$ if and only if $n/r\rightarrow\infty$. In this case of uniform cell classification probabilities, the bounds of Theorems \ref{thm1} and \ref{thm2} converge to zero, for fixed $\lambda\not=1$, if and only if $n/r^3\rightarrow\infty$. We consider it most likely that $T_\lambda\rightarrow_d\chi_{(r-1)}^2$ if $n/r^2\rightarrow\infty$ and that our bounds therefore do not have an optimal dependence on $r$. In the proof of Theorem \ref{thm1}, our bounds for all the remainder terms except $R_3$ decay to zero if and only if $n/r^2\rightarrow\infty$, whilst our bound for $R_3$ decays to zero if and only if $n/r^3\rightarrow\infty$. The bound for $R_3$ follows from a long calculation involving analysis of the multivariate normal Stein equation; see Part II of the proof of Theorem 4.2 of \cite{gaunt chi square}. It is plausible that a more refined analysis could lead to a bound for $R_3$ with an improved dependence on $r$. We consider this to be an interesting but challenging problem.

If we consider the possibility that as $n\rightarrow\infty$ that $p_*$ may decay to zero and that $\lambda\not=1$ and $r$ may tend to infinity, we can read off from the upper bounds of Theorem \ref{thm1} that $T_\lambda\rightarrow_d\chi_{(r-1)}^2$ if $n/(S\lambda\max\{r,\lambda\})\rightarrow\infty$, where $S=\sum_{j=1}^rp_j^{-1}$. If we suppose that $r/\lambda\rightarrow c$, where $0\leq c<\infty$, then we see that $T_\lambda\rightarrow_d\chi_{(r-1)}^2$ if $n/(S\lambda^2)\rightarrow\infty$. From (\ref{momapprox}) this can be seen to be an optimal condition (except perhaps if $\lambda=2/3$).
We conjecture that in fact $T_\lambda\rightarrow_d\chi_{(r-1)}^2$ if $n/(S\lambda^2)\rightarrow\infty$, even if $r\gg\lambda$ (except perhaps if $\lambda=2/3$). 

In stating the above conditions, we have been careful to give our conditions in terms of the sum $S$ rather than $p_*$, because in the $r\rightarrow\infty$ regime it is possible that $S\ll r/p_*$. For example, take $p_1=n^{-1}$, and $p_j=(n-1)/(n(r-1))$, $j=2,\ldots,r$, so that $\sum_{j=1}^rp_j=1$. Then $S=n+(r-1)^2n/(n-1)\ll rn=r/p_*$, if $r\ll n$. If, however, $r/(p_*S)\rightarrow C$, where $1\leq C<\infty$ (which will be the case unless there are some exceptionally small cell classification probabilities), then we can write the conditions in a simple and intuitive form by replacing $S$ by $r/p_*$. For example, our conjectured optimal condition for which $T_\lambda\rightarrow_d\chi_{(r-1)}^2$ would read $np_*/(r\lambda^2)\rightarrow\infty$.

From (\ref{pearbound01}), we see that even if $r\rightarrow\infty$, then $T_1=\chi^2\rightarrow_d\chi_{(r-1)}^2$ provided $np_*\rightarrow\infty$, which is a well-established condition for chi-square approximation of Pearson's statistic to be valid (see \cite{gn96}). Thus, in the $r\rightarrow\infty$ regime, it can be seen that Pearson's statistic converges in distribution faster than any other member of the power divergence family, except perhaps $T_{2/3}$ for which we are unable to provide a definitive answer in this paper. 
\end{remark}

\begin{remark}[Comparison to results from simulation studies]\label{remsimul} Simulation studies assessing the quality of chi-square approximation of the statistic $T_\lambda$ can be found in, amongst others, \cite{cr84,m91,read84a,rc88,rudas86}.  In this remark, we show how some of the findings of these studies are reflected in our bounds.
As is common with small-sample studies for goodness-of-fit tests, most of the studies assumed that the classification probabilities are uniform ($p_j=1/r$ for all $j=1,\ldots,r$) \cite[p.\ 339]{kl80}. Under this assumption, \cite{read84a} concluded that, for $\lambda\in[1/3,3/2]$, the chi-square approximation of $T_\lambda$ is accurate for the purpose of hypothesis testing at significance levels between 0.1 and 0.01, provided $r\leq6$ and $n\geq10$ [that the approximation can be accurate for sample sizes as small as $n=10$ is consistent with the fast $O(n^{-1})$ rate of convergence of the bounds of Theorem \ref{thm1}]. However, if $|\lambda|$ increases, the approximation becomes worse, and this is magnified as $r$ increases for fixed $n$ [our bounds grow as $\lambda$ and $r$ increase]. Outside the equiprobable hypothesis, it is harder to prescribe general rules \cite[p.\ 341]{kl80}, although the accuracy of the approximation diminishes \cite{ptw14} [as we move away from the equiprobable hypothesis the sum $\sum_{j=1}^rp_j^{-1}$ increases]. 

Through a combination of simulation studies and theoretical considerations, \cite{cr84} proposed the statistic $T_{2/3}$ as an alternative to the classical likelihood ratio and Pearson statistics. One of the reasons was because they observed that for the quantities $\mathbb{E}[T_{\lambda}^k]-\mathbb{E}[Y_{r-1}^k]$, $k=1,2,3$, for $Y_{r-1}\sim \chi_{(r-1)}^2$, the second order correction terms vanish in the limit $r\rightarrow\infty$ if and only if $\lambda=2/3,1$. Our bounds do convergence faster in the case $r\rightarrow\infty$ if $\lambda=1$, but not if $\lambda=2/3$, and we know that for $\lambda\in(-1,\infty)\setminus\{2/3,1\}$ the true convergence rate is indeed slower in this regime. It would be interesting to provide a definitive answer as to whether the convergence rate does speed up in the large $r$ regime if $\lambda=2/3$. We do, however, note that in our method of proof there seems to be nothing special about the case $\lambda=2/3$, so a  different approach would be required to provide a positive answer.

\end{remark}

\section{Preliminary lemmas}\label{sec3}


We begin by presenting a series of moment bounds
 for binomial random variables.

\begin{lemma}Let $S=(U-np)/\sqrt{np}$, where $U\sim \mathrm{Bin}(n,p)$, for $n\geq1$, $0<p<1$ such that $np\geq2$. Then
\begin{align}\label{3mombound}\mathbb{E}[|S|^3]&\leq 3, \\
\label{4mombound}\mathbb{E}[S^4]&\leq 4, \\
\label{6mombound}\mathbb{E}[S^6]&\leq 28.
\end{align}
\end{lemma}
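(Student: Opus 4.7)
The plan is to compute the central moments of $U$ in closed form using the cumulant expansion of $\mathrm{Bin}(n,p)$ and then bound each piece using $q:=1-p \le 1$, $pq\le 1/4$, and $np\ge 2$. Writing $\kappa_j^{(B)}$ for the $j$-th cumulant of $\mathrm{Bernoulli}(p)$, the cumulants of $U$ are $\kappa_j = n\kappa_j^{(B)}$, so
\begin{align*}
\kappa_2 &= npq, \quad \kappa_3 = npq(q-p), \quad \kappa_4 = npq(1-6pq),\\
\kappa_6 &= npq\bigl(1 - 30 pq + 120 p^2 q^2\bigr).
\end{align*}
The standard moment-cumulant relations then give $\mu_4 := \mathbb{E}[(U-np)^4] = \kappa_4 + 3\kappa_2^2$ and $\mu_6 := \mathbb{E}[(U-np)^6] = \kappa_6 + 15\kappa_4\kappa_2 + 10\kappa_3^2 + 15\kappa_2^3$.

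For (\ref{4mombound}), I will divide $\mu_4$ by $(np)^2$ to get
\[
\mathbb{E}[S^4] = 3q^2 + \frac{q(1-6pq)}{np}.
\]
Using $q\le 1$, $|1-6pq|\le 1$ (since $pq\in[0,1/4]$), and $np\ge 2$, this is at most $3 + 1/2 = 7/2 \le 4$. For (\ref{3mombound}), I would then simply invoke Jensen's inequality (or equivalently the power-mean inequality) to get $\mathbb{E}[|S|^3] \le (\mathbb{E}[S^4])^{3/4} \le (7/2)^{3/4} < 3$; alternatively one can use Cauchy--Schwarz together with $\mathbb{E}[S^2]=q\le 1$.

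For (\ref{6mombound}), I would expand
\[
\mathbb{E}[S^6] = 15q^3 + \frac{15q^2(1-6pq) + 10q^2(1-2p)^2}{np} + \frac{q(1-30pq+120p^2q^2)}{(np)^2}
\]
and bound each of the three pieces. The first gives at most $15$. For the second, since $|1-6pq|\le 1$ and $(1-2p)^2\le 1$, this is bounded by $25 q^2/(np) \le 25/2$. The third requires the observation that the polynomial $f(x)=1-30x+120x^2$ on $x\in[0,1/4]$ attains its extremes at $f(1/8)=-7/8$ and $f(0)=f(1/4)=1$, so $|1-30pq+120p^2q^2|\le 1$, giving at most $1/(np)^2 \le 1/4$. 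Summing yields $15 + 12.5 + 0.25 = 27.75 \le 28$.

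The only mildly delicate step is the last bound; in particular one needs the sharp observation about the polynomial $1-30pq+120p^2q^2$ on $[0,1/4]$ (a naive triangle inequality gives $1+7.5+7.5=16$ and blows up the total past $28$). Everything else is direct algebra, so no real obstacle arises beyond careful bookkeeping of the cumulant-to-moment conversion.
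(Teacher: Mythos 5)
Your proof is correct and is essentially the paper's own argument: both use exact closed-form central moments of the binomial (you derive them via cumulants, the paper quotes the Stuart--Ord formulas, but these are the same polynomials), both reduce to the identical intermediate bounds $\mathbb{E}[S^4]\leq 3+\tfrac{1}{np}\leq\tfrac{7}{2}$ and $\mathbb{E}[S^6]\leq 15+\tfrac{25}{np}+\tfrac{1}{(np)^2}<28$ using $np\geq2$, and both obtain (\ref{3mombound}) from (\ref{4mombound}) by H\"older. The cumulant bookkeeping, including your sharp observation that $|1-30pq+120p^2q^2|\leq1$ on $pq\in(0,1/4]$, is just a cleaner route to the same bounds.
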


\begin{proof} Using standard formulas for the fourth and sixth central moments of the binomial distribution (see, for example \cite{stuart}), we have that
\begin{align*}\mathbb{E}[S^4]&=\frac{1}{(np)^2}\cdot np(1-p)\big[3(2-n)p^2+3(n-2)p+1]\leq3+\frac{1}{np}\leq\frac{7}{2}<4,
\end{align*}
and
\begin{align*}\mathbb{E}[S^6]&=\frac{1}{(np)^3}\cdot np(1-p)\big[5(3n^2-26n+24)p^4-10(3n^2-26n+24)p^3\\
&\quad+5(3n^2-31n+30)p^2+5(5n-6)p+1\big]\leq 15+\frac{25}{np}+\frac{1}{(np)^2}<28,
\end{align*}
where we used that $n\geq1$, and $0<p<1$ and $np\geq2$ to simplify the bounds. Inequality (\ref{3mombound}) follows from (\ref{4mombound}) by H\"older's inequality: $\mathbb{E}[|S|^3]\leq(\mathbb{E}[S^4])^{3/4}\leq 4^{3/4}<3$.
\end{proof}



\begin{lemma}[T.\ D.\ Ahle \cite{a17}]\label{lem3.2}Let $U\sim \mathrm{Bin}(n,p)$, where $n\geq1$ and $0<p<1$. Let $k>0$. Then
\begin{equation}\label{bin000}\mathbb{E}[U^k] \leq \exp(k^2/(2np))\cdot(np)^k.
\end{equation}
Suppose now that $np\geq \alpha k^2$, where $\alpha>0$. Then it is immediate from (\ref{bin000}) that
\begin{equation}\label{binbound1}\mathbb{E}[U^k] \leq\mathrm{e}^{1/(2\alpha)}(np)^k.
\end{equation}
\end{lemma}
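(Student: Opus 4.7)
The plan is as follows. The second inequality follows immediately from the first: under the hypothesis $np\geq\alpha k^2$ we have $k^2/(2np)\leq 1/(2\alpha)$, so monotonicity of $\exp$ turns the first bound into the second. Thus the whole task is to prove the first inequality, for which I would combine an elementary pointwise exponential bound on $y^k$ with the explicit binomial moment generating function.

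The starting point is the elementary inequality $x\leq e^{x-1}$, valid for all $x\geq 0$, which raised to the $k$-th power gives $y^k\leq (k/t)^k e^{ty-k}$ for any $t>0$ and $y\geq 0$. Applying this pointwise to $U$ and taking expectations,
\begin{equation*}
\mathbb{E}[U^k]\leq (k/t)^k e^{-k}\,\mathbb{E}[e^{tU}]=(k/t)^k e^{-k}(1-p+p e^t)^n\leq (k/t)^k e^{-k}\exp\!\big(np(e^t-1)\big),
\end{equation*}
where the last step uses $\log(1+x)\leq x$ applied to $x=p(e^t-1)$. The natural choice is to pick $t$ so that $np(e^t-1)=k$, namely $t=\log(1+k/(np))$; this cancels the $\pm k$ in the exponent and yields the intermediate bound
\begin{equation*}
\mathbb{E}[U^k]\leq \bigg(\frac{k}{\log(1+k/(np))}\bigg)^{\!k}.
\end{equation*}

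The final step is to simplify this using the sharp elementary inequality $\log(1+x)\geq 2x/(2+x)$ valid for $x\geq 0$ (verified by checking equality at $x=0$ and then comparing derivatives). Applied at $x=k/(np)$ this gives $k/\log(1+k/(np))\leq np+k/2$, whence
\begin{equation*}
\mathbb{E}[U^k]\leq (np+k/2)^k=(np)^k(1+k/(2np))^k\leq (np)^k e^{k^2/(2np)},
\end{equation*}
where the last step uses the standard $(1+y)^k\leq e^{ky}$. This is the desired bound.

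The main obstacle is recognising that the particular auxiliary inequality $\log(1+x)\geq 2x/(2+x)$ (rather than the coarser $\log(1+x)\geq x/(1+x)$, or a naive Taylor expansion at $x=0$) is precisely what produces the constant $1/2$ in the exponent. Looser versions of any of the three steps above give a bound of the same shape but with a strictly worse constant in the exponent, which would be insufficient for the later applications: a worse exponent would degrade the optimal $\lambda^2$-type dependence in the hypotheses of Theorems \ref{thm1} and \ref{thm2} for large $\lambda$.
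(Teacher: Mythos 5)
Your proof is correct, and it is worth noting that it does strictly more than the paper does: the paper offers no proof of (\ref{bin000}) at all, quoting it directly from Ahle \cite{a17}, and the only step it verifies is the same trivial deduction of (\ref{binbound1}) you give, namely $k^2/(2np)\leq 1/(2\alpha)$ and monotonicity of $\exp$. What you have supplied is a self-contained derivation of the cited inequality, and it is in substance the Chernoff/moment-generating-function argument underlying Ahle's result: the pointwise bound $y^k\leq (k/t)^k e^{ty-k}$ (from $x\leq e^{x-1}$ with $x=ty/k$), domination of the binomial MGF via $\log(1+x)\leq x$, the optimal choice $np(e^t-1)=k$ giving the intermediate bound $\mathbb{E}[U^k]\leq \big(k/\log(1+k/(np))\big)^k$ (which is precisely Ahle's sharper form of the bound), and finally $\log(1+x)\geq 2x/(2+x)$ to extract the constant $1/2$ in the exponent. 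Every step checks out: your auxiliary logarithm inequality reduces to $(2+x)^2\geq 4(1+x)$, i.e.\ $x^2\geq 0$; the substitution $x=k/(np)$ gives $k/\log(1+k/(np))\leq np+k/2$; and $(1+k/(2np))^k\leq e^{k^2/(2np)}$ is standard. Interestingly, the source of the paper contains a commented-out earlier version of this lemma in which the author ran the same MGF argument with the cruder choice $t=k/(np)$, obtaining only $\mathbb{E}[U^k]<2.051(np)^k$ under the stronger hypothesis $np\geq k^2$; your sharper optimisation of $t$ together with the $2x/(2+x)$ lower bound on the logarithm is exactly what upgrades that to the clean $\exp(k^2/(2np))$ form, which supports your closing observation that the constant $1/2$ in the exponent is what keeps the downstream hypotheses of Theorems \ref{thm1} and \ref{thm2} as mild as $np_*\geq 2(\lambda-3)^2$.
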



\begin{remark}Bounds of the form $\mathbb{E}[U^k]\leq C_k(np)^k$, where $C_k$ is an explicit constant only depending on $k$, in which no further restrictions on $n$ and $p$ are available (see \cite{p95}). However, for our purpose of obtaining bounds with the best possible dependence on all parameters, such a bound is not suitable, as it would lead to a worse dependence on $\lambda$ in the final bound. The additional assumption $np\geq \alpha k^2$ for inequality (\ref{binbound1}) leads to the assumption $np\geq 2(\lambda-3)^2$, $\lambda\geq3$, in Theorem \ref{thm1} (and a similar assumption in Theorem \ref{thm2}), which is mild and preferable to a worse dependence of the bounds on $\lambda$.
\end{remark}

\begin{lemma}\label{sj3sk3}Let $(U_1,\ldots,U_r$) be the multinomial vector of observed counts, as in Theorem \ref{thm1}, where $r\geq2$. Suppose also that $np_j\geq r$ for all $j=1,\ldots,r$. For $j=1,\ldots,r$, let $S_j=(U_j-np_j)/\sqrt{np_j}$. Then, for $j\not= k$,
\begin{equation}\label{sj3sk3z}|\mathbb{E}[S_j^3S_k^3]|\leq\frac{6}{r}+4\sqrt{p_jp_k}.
\end{equation} 
\end{lemma}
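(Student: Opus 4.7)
The plan is to reduce the mixed moment to a computation about centred binomial moments via conditioning on the pair sum. Set $M := U_j + U_k \sim \mathrm{Bin}(n, p_j+p_k)$ and $q := p_j/(p_j+p_k)$, so that $U_j \mid M \sim \mathrm{Bin}(M, q)$ and $U_k = M - U_j$. Introducing $W := U_j - Mq$ and $V := M - n(p_j+p_k)$, one finds $U_j - np_j = W + qV$ and $U_k - np_k = -W + (1-q)V$, from which
\[ (U_j - np_j)(U_k - np_k) = -W^2 + (1-2q)WV + q(1-q)V^2. \]
Cubing expresses $(U_j - np_j)^3 (U_k - np_k)^3$ as a sum of ten monomials in $W$ and $V$, each of bi-degree summing to $6$.

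Expectations are taken termwise by the tower property. Conditional on $M$, $V$ is deterministic and $W$ is a centred $\mathrm{Bin}(M, q)$, so each $\mathbb{E}[W^l \mid M]$ is a standard polynomial expression in $M$ with coefficients involving $q(1-q)$ and $1-2q$. Averaging over $M$ reduces each of the ten contributions to a product of a $q$-coefficient with a centred moment of $V \sim \mathrm{Bin}(n, p_j+p_k) - n(p_j+p_k)$. The hypothesis $np_j \geq r \geq 2$ forces $n(p_j+p_k) \geq 2r \geq 4$, so the moment bounds of the preceding lemma apply and give sharp control of $\mathbb{E}[V^2]$, $\mathbb{E}[|V|^3]$, $\mathbb{E}[V^4]$ and $\mathbb{E}[V^6]$ by constants times appropriate powers of $n(p_j+p_k)$.

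The final step is to normalise by $(np_j)^{3/2}(np_k)^{3/2} = n^3(p_j p_k)^{3/2}$ and apply the identities $q(1-q) = p_j p_k/(p_j+p_k)^2 \leq 1/4$ and $|1-2q|\leq 1$, together with the AM--GM inequality $p_j+p_k \geq 2\sqrt{p_j p_k}$. Each of the ten normalised contributions then splits into a piece bounded by a constant multiple of $\sqrt{p_j p_k}$ (these arise from the mixed $WV$-terms and from terms where the $q(1-q)$ factor combines with the normaliser to leave exactly one power of $\sqrt{p_j p_k}$, yielding the $4\sqrt{p_j p_k}$ part of the bound) and a piece bounded by a constant multiple of $1/[n(p_j+p_k)] \leq 1/(2r)$, which arises from the $V^6$-type and $W^6$-type contributions after the coefficient $q^3(1-q)^3$ is absorbed into the normalisation and the $np_* \geq r$ hypothesis is invoked (yielding the $6/r$ part). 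Summing the two types gives the stated bound.

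The principal obstacle is the combinatorial bookkeeping: each of the ten cube monomials, through the conditional-moment expansion of $W$, spawns several sub-terms of differing $n$-scaling, and these must all be bounded individually before being grouped. Particular care is needed to avoid crude bounds (such as $(1-p_j)(1-p_k)\leq 1$) that would inflate the final constants, and to correctly allocate contributions between the $\sqrt{p_j p_k}$ and $1/r$ groupings so the stated coefficients $6$ and $4$ emerge rather than larger ones.
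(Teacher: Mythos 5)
Your reduction is algebraically sound: with $\pi:=p_j+p_k$, $q:=p_j/\pi$, $M=U_j+U_k$, $W=U_j-Mq$ and $V=M-n\pi$, the identities $U_j-np_j=W+qV$, $U_k-np_k=-W+(1-q)V$ and the conditional law $U_j\mid M\sim\mathrm{Bin}(M,q)$ are all correct, and the tower-property evaluation of the ten monomials is legitimate (indeed some, like $3BC^2\propto WV^5$, vanish since $\mathbb{E}[W\mid M]=0$). The genuine gap is your final claim that each of the ten normalised contributions can be \emph{individually} bounded by a multiple of $\sqrt{p_jp_k}$ or of $1/(n\pi)\leq 1/(2r)$. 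This is quantitatively impossible: several individual contributions are of constant order, and the lemma's bound is only of the claimed size because they cancel. Concretely, take $p_j=p_k=1/r$ (so $q=1/2$, $\pi=2/r$, and $q(1-q)\pi^2=p_jp_k$) and let $n\to\infty$. Using $\mathbb{E}[V^6]\sim 15(n\pi(1-\pi))^3$, the $C^3=q^3(1-q)^3V^6$ term gives
\begin{equation*}
\frac{q^3(1-q)^3\,\mathbb{E}[V^6]}{n^3(p_jp_k)^{3/2}}\longrightarrow \frac{15(p_jp_k)^{3/2}(1-\pi)^3}{\pi^3}=\frac{15}{8}\Big(1-\frac{2}{r}\Big)^3,
\end{equation*}
while the $-W^6$ term contributes $\approx-15/8$ and the mixed terms $3A^2C$, $3AC^2$ contribute $\approx\pm 9/8$. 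Each is $\Theta(1)$, yet the target bound $6/r+4\sqrt{p_jp_k}=10/r$ tends to zero. (Your claimed $1/(n\pi)$ allocation for the $V^6$- and $W^6$-type terms is simply false: after absorbing $q^3(1-q)^3$ into the normaliser, what remains is $(p_jp_k)^{3/2}/\pi^3\leq 1/8$ times $\mathbb{E}[V^6]/(n\pi)^3$, which is bounded below by a constant, with no spare factor of $1/(n\pi)$.) The only way through your decomposition is to compute the conditional moments $\mathbb{E}[W^\ell\mid M]$ and the moments of $V$ \emph{exactly} to the two leading orders in $n$, combine the order-$(n\pi)^3$ coefficients before taking any absolute values — their sum is $O(\pi)$ times the individual sizes, which is where the $O(\sqrt{p_jp_k})$ emerges — and only then bound remainders with the crude moment inequalities.

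For comparison, the paper avoids this cancellation problem entirely by expanding $S_j^3S_k^3$ in the centred trial indicators $\tilde I_j(i)=I_j(i)-p_j$ and using exact Bernoulli moment formulas. In that organisation every elementary factor is already small — e.g.\ $\mathbb{E}[\tilde I_j(i)\tilde I_k(i)]=-p_jp_k$ — so the leading $n^3$ block comes out exactly as $-9(p_jp_k)^2(1-p_j)(1-p_k)-6(p_jp_k)^3$, of normalised size $\tfrac94\sqrt{p_jp_k}+6(p_jp_k)^{3/2}$, and every term may be bounded in absolute value without loss. If you wish to rescue the conditioning route you must reproduce that structure; as written, the termwise bounding step fails and the stated constants $6$ and $4$ cannot be recovered.
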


\begin{proof} Let $I_j(i)$ be the indicator that trial $i$ results in classification in cell $j$, and let $\tilde{I_j}(i) = I_j(i) - p_j$ be its standardised version.  We can then write $S_j = (np_j)^{-1/2}\sum_{i=1}^n \tilde{I}_j(i)$. We note that $\tilde{I_{j_1}}(i_1)$ and $\tilde{I_{j_2}}(i_2)$ are independent for all $j_1,j_2\in\{1,\ldots,r\}$ if $i_1\not=i_2$. On using this property together with the fact that $\mathbb{E}[\tilde{I}_{j}(i)]=0$ for all $i=1,\ldots,n$ and $j=1,\ldots,r $, we obtain that, for $j\not= k$,
\begin{align}&\mathbb{E}[S_j^3S_k^3]=\frac{1}{n^3(p_jp_k)^{3/2}}\mathbb{E}\bigg[\bigg(\sum_{i=1}^n \tilde{I}_j(i)\bigg)^3\bigg(\sum_{\ell=1}^n\tilde{I}_k(\ell)\bigg)^3\bigg]\nonumber\\
&\quad=\frac{1}{n^3(p_jp_k)^{3/2}}\bigg\{\sum_{i_1=1}^n\mathbb{E}[\tilde{I}_j(i_1)^3\tilde{I}_k(i_1)^3]+\sum_{i_1=1}^n\sum_{i_2\not=i_1}^n\Big\{\mathbb{E}[\tilde{I}_j(i_1)^3]\mathbb{E}[\tilde{I}_k(i_2)^3]\nonumber\\
&\quad\quad+9\mathbb{E}[\tilde{I}_j(i_1)^2\tilde{I}_k(i_1)^2]\mathbb{E}[\tilde{I}_{j}(i_2)\tilde{I}_{k}(i_2)]+3\mathbb{E}[\tilde{I}_j(i_1)^3\tilde{I}_k(i_1)]\mathbb{E}[\tilde{I}_{k}(i_2)^2]\Big\}\nonumber\\
&\quad\quad+3\mathbb{E}[\tilde{I}_k(i_1)^3\tilde{I}_j(i_1)]\mathbb{E}[\tilde{I}_{j}(i_2)^2]+\sum_{i_1=1}^n\sum_{i_2\not=i_1}^n\sum_{i_3\not=i_1,i_2}^n\Big\{9\mathbb{E}[\tilde{I}_j(i_1)^2]\mathbb{E}[\tilde{I}_j(i_2)\tilde{I}_{k}(i_2)]\mathbb{E}[\tilde{I}_{k}(i_3)^2]\nonumber\\
\label{longcal}&\quad\quad+6\mathbb{E}[\tilde{I}_j(i_1)\tilde{I}_k(i_1)]\mathbb{E}[\tilde{I}_j(i_2)\tilde{I}_k(i_2)]\mathbb{E}[\tilde{I}_j(i_3)\tilde{I}_k(i_3)]\Big\}\bigg\}.
\end{align}
We now calculate the expectations in the above expression. We have that $\mathbb{E}[\tilde{I}_{j}(i_2)^2]=\mathrm{Var}(I_j(i_2))=p_j(1-p_j)$. Also, by a routine calculation (or using a standard formula for the third central moment of the Bernoulli distribution), we have that $\mathbb{E}[\tilde{I}_j(i_1)^3]=p_j(1-p_j)(1-2p_j)$. For the more complex expectations involving products of powers of $\tilde{I}_j(i_1)$ and $\tilde{I}_k(i_1)$, $j\not=k$, we use the fact that each trial leads to a unique classification. For $u,v\in\mathbb{N}$, 
\begin{align*}\mathbb{E}[\tilde{I}_j(i_1)^u\tilde{I}_k(i_1)^v]&=\mathbb{E}[(I_j(i_1)-p_j)^u(I_k(i_1)-p_k)^v]\\
&=-p_j^u(1-p_k)^v\mathbb{P}(I_k(i_1)=1)-p_k^v(1-p_j)^u\mathbb{P}(I_j(i_1)=1)\\
&\quad+p_j^up_k^v\mathbb{P}(I_j(i_1)=I_k(i_1)=0) \\
&=-p_j^u(1-p_k)^vp_k-p_k^v(1-p_j)^up_j+p_j^up_k^v(1-p_j-p_k)\\
&=p_jp_k\big[-p_j^{u-1}(1-p_k)^v-p_k^{v-1}(1-p_j)^u+(1-p_j-p_k)p_j^{u-1}p_k^{v-2}\big].
\end{align*}
In the case $u=v=1$, we have the simplification $\mathbb{E}[\tilde{I}_j(i_1)\tilde{I}_k(i_1)]=-p_jp_k$. 
Substituting these formulas for the expectations present in (\ref{longcal}) now gives us
\begin{align*}\mathbb{E}[S_j^3S_k^3]&=\frac{1}{n^3(p_jp_k)^{3/2}}\Big[n\cdot p_jp_k\big[-p_j^2(1-p_k)^3-p_k^2(1-p_j)^3+(1-p_j-p_k)(p_jp_k)^2\big]\\
&\quad+n(n-1)\Big\{\Big(p_jp_k(1-p_j)(1-p_k)(1-2p_j)(1-2p_k)\Big)\\
&\quad+9\Big((p_jp_k)^2\big[p_k(1-p_j)^2+p_j(1-p_k)^2+p_j+p_k-1\big]\Big)\\
&\quad+3\Big(p_jp_k^2(1-p_k)(1-2p_k)\big[1-p_j-p_k-p_j^2(1-p_k)-(1-p_j)^3\big]\Big)\\
&\quad+3\Big(p_j^2p_k(1-p_j)(1-2p_j)\big[1-p_k-p_j-p_k^2(1-p_j)-(1-p_k)^3\big]\Big)\Big\}\\
&\quad+n(n-1)(n-2)\Big\{9\Big(-(p_jp_k)^2(1-p_j)(1-p_k)\Big)+6\Big(-(p_jp_k)^3\Big)\Big\}\Big].
\end{align*}
Elementary calculations that make use of the facts that $p_j,p_k\in(0,1)$ and that $0\leq p_j+p_k\leq 1$ now yield the bound
\begin{align}
|\mathbb{E}[S_j^3S_k^3]|&\leq \frac{1}{n^3(p_jp_k)^{3/2}}\Big\{np_jp_k+n^2p_jp_k+9n^2(p_jp_k)^2+3n^2p_jp_k^2+3n^2p_j^2p_k\nonumber\\
&\quad+9n^3(p_jp_k)^2+6n^3(p_jp_k)^3\Big\} \nonumber\\
&=\frac{1}{n^2\sqrt{p_jp_k}}+\frac{1}{4n\sqrt{p_jp_k}}+\frac{9\sqrt{p_jp_k}}{n}+\frac{3p_k^{3/2}}{n\sqrt{p_j}}+\frac{3p_j^{3/2}}{n\sqrt{p_k}}+\frac{9}{4}\sqrt{p_jp_k}+6(p_jp_k)^{3/2}\nonumber\\
&=\frac{1}{n\cdot n\sqrt{p_jp_k}}+\frac{1}{4n\sqrt{p_jp_k}}+\frac{9p_jp_k}{n\sqrt{p_jp_k}}+\frac{3\sqrt{p_j}p_k^{3/2}}{np_j}+\frac{3p_j^{3/2}\sqrt{p_k}}{np_k}+\frac{9}{4}\sqrt{p_jp_k}\nonumber\\
\label{longcal2}&\quad+6(p_jp_k)\cdot\sqrt{p_jp_k}.
\end{align}
In obtaining this bound, we did not attempt to optimise numerical constants, although in bounding the second and sixth terms we did take advantage of the simple inequality $(1-p_j)(1-p_k)\leq1/4$ (maximum at $p_j=p_k=1/2$). The final equality in (\ref{longcal2}) was given in anticipation of obtaining our final bound (\ref{sj3sk3z}) for $|\mathbb{E}[S_j^3S_k^3]|$. We now bound the terms in (\ref{longcal2}) using the following considerations. By assumption, $np_j\geq r$ for all $j=1,\ldots,r$, which also implies that $n\geq4$, as $r\geq2$. Moreover, for $j\not=k$, $p_jp_k\leq1/4$ (maximised for $p_j=p_k=1/2$) and $\sqrt{p_j}p_k^{3/2}\leq3\sqrt{3}/16$ (maximised at $p_j=1/4$, $p_k=3/4$), with the same bound holding for $p_j^{3/2}\sqrt{p_k}$ by symmetry. Making these considerations, we arrive at the bound
\begin{align*}|\mathbb{E}[S_j^3S_k^3]|\leq\frac{1}{4r}+\frac{1}{4r}+\frac{9}{4r}+2\cdot3\cdot\frac{3\sqrt{3}}{16r}+\frac{9}{4}\sqrt{p_jp_k}+\frac{6}{4}\sqrt{p_jp_k} <\frac{6}{r}+4\sqrt{p_jp_k},
\end{align*}
where in obtaining the second inequality we rounded the numerical constants up to the nearest integer. The proof of the lemma is complete.
\end{proof}

The following lemma, which is proved in Appendix \ref{appa}, will be needed in the proof of Theorem \ref{thm1}. Lemma \ref{lem2} below will be needed in the proof of Theorem \ref{thm2}. The proof of Lemma \ref{lem2} is similar to that of Lemma \ref{lem1} and is given in the Supplementary Information.

\begin{lemma}\label{lem1}Let $a>0$. 
 Then, 

\begin{itemize}

\item[(i)] For $x\geq0$,
\begin{equation}\label{lembound1}\bigg|2x\log\bigg(\frac{x}{a}\bigg)-2(x-a)-\frac{(x-a)^2}{a}+\frac{(x-a)^3}{3a^2}\bigg|\leq\frac{2(x-a)^4}{3a^3}.
\end{equation}

\item[(ii)] Suppose $\lambda\geq3$. Then, for $x\geq0$,
\begin{align}&\bigg|\frac{x^{\lambda+1}}{a^\lambda}-a-(\lambda+1)(x-a)-\frac{\lambda(\lambda+1)}{2a}(x-a)^2-\frac{(\lambda-1)\lambda(\lambda+1)}{6a^2}(x-a)^3\bigg|\nonumber\\
\label{lembound3}&\quad\leq \frac{(\lambda-2)(\lambda-1)\lambda(\lambda+1)}{24}\bigg(1+\Big(\frac{x}{a}\Big)^{\lambda-3}\bigg)\frac{(x-a)^4}{a^3}.
\end{align}

\item[(iii)] Suppose $\lambda\in(-1,3)\setminus\{0\}$. Then, for $x\geq0$,
\begin{align}&\bigg|\frac{x^{\lambda+1}}{a^\lambda}-a-(\lambda+1)(x-a)-\frac{\lambda(\lambda+1)}{2a}(x-a)^2-\frac{(\lambda-1)\lambda(\lambda+1)}{6a^2}(x-a)^3\bigg|\nonumber\\
\label{lembound32}&\quad\leq \frac{|(\lambda-2)(\lambda-1)\lambda|}{6}\frac{(x-a)^4}{a^3}.
\end{align}

\end{itemize}

\end{lemma}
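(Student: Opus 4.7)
The plan is to apply Taylor's theorem with the integral form of the remainder to each of the three functions involved, then control the resulting integral by a convenient change of variables that splits into the two ranges $x \geq a$ and $0 \leq x < a$.

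For part (i), set $f(x) = 2x\log(x/a)$. A direct computation gives $f(a) = 0$, $f'(a) = 2$, $f''(a) = 2/a$, $f'''(a) = -2/a^2$ and $f^{(4)}(t) = 4/t^3$, so that the expression inside the absolute value is precisely the Taylor remainder $R_3(x) = \tfrac{2}{3}\int_a^x (x-t)^3 t^{-3}\,dt$. Substituting $t = a + s(x-a)$ for $x \geq a$, or $t = x + s(a-x)$ for $0 \leq x < a$, reduces the problem to bounding an integrand of the form $((1-s)a + sx)^{-3}$ or $((1-s)x + sa)^{-3}$ times a polynomial in $s$. In the first range, $a + s(x-a) \geq a$ directly yields a factor of $1/a^3$; in the second, the inequality $(1-s)x + sa \geq sa$ cancels the singular $s^3$ in the numerator and again produces $1/a^3$. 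Carrying out the elementary integrations gives $|R_3(x)| \leq \tfrac{2(x-a)^4}{3a^3}$, with equality at $x = 0$, which explains why this constant is sharp.

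For parts (ii) and (iii), set $g(x) = x^{\lambda+1}/a^\lambda$, whose fourth derivative is $g^{(4)}(t) = (\lambda-2)(\lambda-1)\lambda(\lambda+1)t^{\lambda-3}/a^\lambda$. The Taylor remainder equals
$$\frac{(\lambda-2)(\lambda-1)\lambda(\lambda+1)}{6a^\lambda}\int_a^x (x-t)^3 t^{\lambda-3}\,dt,$$
so it remains to control the absolute value of this integral in terms of $(x-a)^4$. In part (ii), where $\lambda \geq 3$, the map $t \mapsto t^{\lambda-3}$ is nondecreasing, so $t^{\lambda-3}$ is bounded by $x^{\lambda-3}$ on $[a,x]$ and by $a^{\lambda-3}$ on $[x,a]$; the elementary identity $\max(x^{\lambda-3}, a^{\lambda-3}) \leq a^{\lambda-3}(1 + (x/a)^{\lambda-3})$ then delivers the factor appearing on the right-hand side of (\ref{lembound3}).

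Part (iii), covering $\lambda \in (-1,3)\setminus\{0\}$, is the most delicate: here $t^{\lambda-3}$ is nonincreasing, and crucially the bound must not retain the factor $\lambda+1$ (which would blow up as $\lambda \downarrow -1$). For $x \geq a$ the crude estimate $t^{\lambda-3} \leq a^{\lambda-3}$ produces the prefactor $(\lambda+1)/4 \leq 1$ (as $\lambda < 3$), which is absorbed into the claimed constant. For $0 \leq x < a$, apply the substitution $t = x + s(a-x)$ to obtain $(a-x)^4 \int_0^1 s^3 ((1-s)x + sa)^{\lambda-3}\,ds$; the inequality $(1-s)x + sa \geq sa$ together with $\lambda - 3 < 0$ yields $((1-s)x + sa)^{\lambda-3} \leq (sa)^{\lambda-3}$, reducing the integral to $a^{\lambda-3}\int_0^1 s^{\lambda}\,ds = a^{\lambda-3}/(\lambda+1)$. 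This $1/(\lambda+1)$ exactly cancels the $(\lambda+1)$ factor coming from the fourth derivative, giving the claimed constant $|(\lambda-2)(\lambda-1)\lambda|/6$ without any $\lambda+1$ in the denominator. Engineering this cancellation is the main point of the proof, and it is the sole reason for treating (ii) and (iii) as separate cases.
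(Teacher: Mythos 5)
Your proof is correct, and it takes a genuinely different route from the paper's. The paper never uses the integral form of the Taylor remainder: after rescaling to $a=1$, part (i) and the range $0<x<2$ of part (iii) are handled by power series expansion about $x=1$ --- in (i) bounding the tail via $\sum_{k\geq0}\frac{1}{(k+3)(k+4)}=\frac{1}{3}$, in (iii) observing that the generalised binomial coefficients $\binom{\lambda+1}{k+4}$ alternate in sign for $\lambda<3$, so that $|g_\lambda(x)/(x-1)^4|$ is maximised at the endpoint $x=0$, where it equals $|(\lambda-2)(\lambda-1)\lambda|/6$ --- while for $x>2$ in (i) the paper compares the growth of $|f'(x)|$ against $\frac{8}{3}(x-1)^3$, and part (ii) together with the range $x\geq2$ of (iii) uses the Lagrange form $\frac{(\lambda-2)(\lambda-1)\lambda(\lambda+1)}{24}\xi^{\lambda-3}(x-1)^4$ with $\xi^{\lambda-3}\leq\max\{1,x\}^{\lambda-3}$, respectively $\xi^{\lambda-3}<1$ combined with $(\lambda+1)/24\leq1/6$. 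Your unified integral-remainder argument, built on the substitution $t=(1-s)x+sa$ and the inequality $(1-s)x+sa\geq sa$, replaces all of this case analysis; in particular, the exact evaluation $\int_0^1 s^{\lambda}\,\mathrm{d}s=\frac{1}{\lambda+1}$ is a different --- and arguably more transparent --- mechanism for cancelling the $(\lambda+1)$ factor in (iii) than the paper's alternating-series argument (your $x\geq a$ branch absorbs $(\lambda+1)/4\leq1$ exactly as the paper's $x\geq2$ branch does), and your method also makes visible the sharpness of the constant $2/3$ in (i) via equality at $x=0$. What each approach buys: yours is shorter and treats all regimes uniformly; the paper's series and Lagrange-form arguments use only pointwise calculus and sidestep any regularity discussion at the left endpoint. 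On that last point, one sentence is worth adding to your write-up: for $\lambda<3$ the integrands $f^{(4)}(t)=4/t^3$ and $t^{\lambda-3}$ blow up as $t\downarrow0$, so the fourth-order Taylor formula with integral remainder is not literally available on $[0,a]$; this is harmless --- either check $x=0$ directly (as you in effect do in (i)), or let $x\downarrow0$ in the inequality established for $x>0$, both sides being continuous in $x$ --- but it should be said, just as the paper verifies $x=0$ separately.
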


\begin{lemma}\label{lem2} Let $a>0$. Then, 

\begin{itemize}

\item[(i)] For $x\geq0$,
\begin{equation}\label{lembound2}\bigg|2x\log\bigg(\frac{x}{a}\bigg)-2(x-a)-\frac{(x-a)^2}{a}\bigg|\leq\frac{|x-a|^3}{a^2}.
\end{equation}

\item[(ii)] Suppose $\lambda\geq2$. Then, for $x\geq0$,
\begin{align}&\bigg|\frac{x^{\lambda+1}}{a^\lambda}-a-(\lambda+1)(x-a)-\frac{\lambda(\lambda+1)}{2a}(x-a)^2\bigg|\nonumber\\
\label{lembound4}&\quad\leq \frac{(\lambda-1)\lambda(\lambda+1)}{6}\bigg(1+\Big(\frac{x}{a}\Big)^{\lambda-2}\bigg)\frac{|x-a|^3}{a^2}.
\end{align}

\item[(iii)] Suppose $\lambda\in(-1,2)\setminus\{0\}$. Then, for $x\geq0$,
\begin{align}\label{lembound42}&\bigg|\frac{x^{\lambda+1}}{a^\lambda}-a-(\lambda+1)(x-a)-\frac{\lambda(\lambda+1)}{2a}(x-a)^2\bigg|\leq \frac{|(\lambda-1)\lambda|}{2}\frac{|x-a|^3}{a^2}.
\end{align}

\end{itemize}

\end{lemma}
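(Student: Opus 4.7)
The plan for part (i) is to exploit the fact that $g(x) := 2x\log(x/a) - 2(x-a) - (x-a)^2/a$ vanishes to second order at $a$: direct computation yields $g(a) = g'(a) = g''(a) = 0$ and $g'''(t) = -2/t^2$. The integral form of Taylor's theorem then gives $g(x) = -\int_a^x (x-t)^2 t^{-2}\,dt$. The key estimate is that $|x-t|/t \leq |x-a|/a$ for every $t$ lying between $x$ and $a$ (both positive); this follows from the monotonicity of $s \mapsto x/s - 1$ on $[a,\infty)$ when $x \geq a$, and analogously for $x \leq a$. Squaring this ratio bound and integrating across an interval of length $|x-a|$ yields $|g(x)| \leq (|x-a|/a)^2 \cdot |x-a| = |x-a|^3/a^2$.

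For parts (ii) and (iii) the starting point is the same. The function $h(x) := x^{\lambda+1}/a^\lambda - a - (\lambda+1)(x-a) - \lambda(\lambda+1)(x-a)^2/(2a)$ vanishes to second order at $a$ with $h'''(t) = (\lambda-1)\lambda(\lambda+1) t^{\lambda-2}/a^\lambda$, so Taylor's theorem gives $h(x) = \frac{(\lambda-1)\lambda(\lambda+1)}{2a^\lambda}\int_a^x (x-t)^2 t^{\lambda-2}\,dt$. In part (ii), where $\lambda \geq 2$, the factor $t^{\lambda-2}$ is non-decreasing and so is bounded on the integration range by $\max(x,a)^{\lambda-2}$; integrating and combining the two cases through the envelope $1 + (x/a)^{\lambda-2}$ produces the stated bound. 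In part (iii), where $\lambda \in (-1,2) \setminus \{0\}$, the case $x \geq a$ is still straightforward: since $\lambda - 2 < 0$ and $t \geq a$, one has $t^{\lambda-2} \leq a^{\lambda-2}$, producing a constant $|(\lambda-1)\lambda(\lambda+1)|/6$, and the sharper constant $|(\lambda-1)\lambda|/2$ is recovered from the inequality $|\lambda+1| \leq 3$ that holds on this range.

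The main obstacle is the case $0 \leq x \leq a$ in part (iii). Here $t^{\lambda-2}$ is decreasing and becomes singular at $t = 0$, and a short computation shows $h(0) = \lambda(1-\lambda)a/2$, so the target bound is actually tight at $x = 0$ and leaves no room for a crude estimate. To handle this I would substitute $t = x + (a-x)w$ with $w \in [0,1]$, which rewrites the integral as $(a-x)^3 \int_0^1 w^2 (x + (a-x)w)^{\lambda-2}\,dw$. The crucial step is the elementary inequality $x + (a-x)w \geq aw$, equivalent to $x(1-w) \geq 0$ and hence valid for $x \geq 0$ and $w \leq 1$; since $\lambda - 2 < 0$ this upgrades to $(x + (a-x)w)^{\lambda-2} \leq a^{\lambda-2} w^{\lambda-2}$. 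The factor $w^{\lambda-2}$ then combines with the pre-existing $w^2$ to give $w^\lambda$, and $\int_0^1 w^\lambda\,dw = 1/(\lambda+1)$ is finite precisely because $\lambda > -1$. The $(\lambda+1)$ denominator cancels the matching factor in the Taylor prefactor, yielding the clean bound $|(\lambda-1)\lambda| \, |a-x|^3 / (2a^2)$.
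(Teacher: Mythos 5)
Your proposal is correct, and it takes a genuinely different route from the paper's proof. The paper normalizes to $a=1$ and splits at $x=2$: on $0<x<2$ it expands in powers of $(x-1)$ --- for (i) bounding the tail via $\sum_{k\geq0}1/((k+2)(k+3))=1/2$, and for (iii) exploiting that the generalized binomial coefficients $\binom{\lambda+1}{k+3}$ have alternating signs when $\lambda<2$, so the series factor $G_\lambda(x)$ is dominated by $|G_\lambda(0)|=|(\lambda-1)\lambda|/2$ --- while on $x\geq2$ it uses the Lagrange form of the remainder (plus, in (i), a derivative-growth comparison starting from $|f(2)|$). You instead work directly with the integral form of the Taylor remainder throughout, unnormalized. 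In (i) your pointwise estimate $|x-t|/t\leq|x-a|/a$ for $t$ between $a$ and $x$ replaces both the series computation and the growth argument in one stroke. In (ii) your monotone bound $t^{\lambda-2}\leq\max(x,a)^{\lambda-2}\leq a^{\lambda-2}(1+(x/a)^{\lambda-2})$ is essentially the paper's Lagrange-remainder argument in integral form. The genuine divergence is (iii) on $0\leq x\leq a$, where the paper's alternating-sign series trick is replaced by your substitution $t=x+(a-x)w$ together with $x+(a-x)w\geq aw$, so that $w^2(x+(a-x)w)^{\lambda-2}\leq a^{\lambda-2}w^{\lambda}$ and $\int_0^1 w^\lambda\,\mathrm{d}w=1/(\lambda+1)$ cancels the $(\lambda+1)$ in the prefactor exactly; this makes the sharpness of the constant at $x=0$ (where $|h(0)|=|\lambda(1-\lambda)|a/2$) structurally visible, whereas in the paper it emerges only from the evaluation $G_\lambda(0)$. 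What your route buys is a unified, case-split-free argument valid for general $a$ with no series bookkeeping; what the paper's buys is that the $(0,2)$ computation is term-by-term elementary and reuses verbatim the template of its proof of Lemma \ref{lem1}. The only point deserving an explicit sentence in a final write-up is the endpoint $x=0$: for $\lambda\in(-1,2)$ the third derivative $t^{\lambda-2}$ is singular at $t=0$, so the integral remainder formula at $x=0$ should be justified by continuity, noting that $\int_0^a t^2\cdot t^{\lambda-2}\,\mathrm{d}t=\int_0^a t^\lambda\,\mathrm{d}t<\infty$ precisely because $\lambda>-1$ --- a convergence your substitution already exhibits.
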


\section{Proofs of main results 
}\label{sec4}

In this section, we prove all the results stated in Section \ref{sec2}, except for Theorem \ref{thm0}, which is proved in Appendix \ref{appchi}. We begin by introducing some notation and make some basic observations regarding the power divergence statistic $T_\lambda$. For $j=1,\ldots,r$, let $S_j=(U_j-np_j)/\sqrt{np_j}$ denote the standardised cell counts, and let $\boldsymbol{S}=(S_1,\ldots,S_r)^\intercal$. Observe that $\sum_{j=1}^rU_j=n$ and $\sum_{j=1}^r\sqrt{p_j}S_j=0$, and that $U_j\sim\mathrm{Bin}(n,p_j)$, $j=1,\ldots,r$. With this notation, we may write the power divergence statistic $T_\lambda$, Pearson's statistic $\chi^2$, and the likelihood ratio statistic $L$ as
\begin{align*}T_\lambda&=\frac{2}{\lambda(\lambda+1)}\bigg[\sum_{j=1}^rnp_j\bigg(1+\frac{S_j}{\sqrt{np_j}}\bigg)^{\lambda+1}-n\bigg],\\
\chi^2&=\sum_{j=1}^r S_j^2, \\
L&=T_0=2\sum_{j=1}^r\big(np_j+\sqrt{np_j}S_j\big)\log\bigg(1+\frac{S_j}{\sqrt{np_j}}\bigg).
\end{align*}



\noindent{\emph{Proof of Theorem \ref{thm1}.}} For our purpose of deriving an order $n^{-1}$ bound, it will be convenient to write the power divergence statistic $T_\lambda$ in the form
\begin{equation*}T_\lambda=\sum_{j=1}^rS_j^2+\frac{\lambda-1}{3}\sum_{j=1}^r\frac{S_j^3}{\sqrt{np_j}}+R_\lambda(\boldsymbol{S}),
\end{equation*}
where, for $\lambda\in(-1,\infty)\setminus\{0\}$,
\begin{align*}R_\lambda(\boldsymbol{S})&=\frac{2}{\lambda(\lambda+1)}\sum_{j=1}^r\bigg[np_j\bigg(1+\frac{S_j}{\sqrt{np_j}}\bigg)^{\lambda+1}-np_j-(\lambda+1)\sqrt{np_j}S_j-\frac{\lambda(\lambda+1)}{2}S_j^2\\
&\quad-\frac{(\lambda-1)\lambda(\lambda+1)}{6\sqrt{np_j}}S_j^3\bigg],
\end{align*}
and, for $\lambda=0$,
\begin{align*}R_0(\boldsymbol{S})&=\sum_{j=1}^r\bigg[2\big(np_j+\sqrt{np_j}S_j\big)\log\bigg(1+\frac{S_j}{\sqrt{np_j}}\bigg)-2\sqrt{np_j}S_j-S_j^2-\frac{S_j^3}{3\sqrt{np_j}}\bigg].
\end{align*}
Here we used that $\sum_{j=1}^rp_j=1$ and $\sum_{j=1}^r\sqrt{p_j}S_j=0$.

Let us now bound the quantity of interest $|\mathbb{E}[h(T_\lambda)]-\chi_{(r-1)}^2h|$. By a first-order Taylor expansion and the triangle inequality we have that
\begin{align*}|\mathbb{E}[h(T_\lambda)]-\chi_{(r-1)}^2h|&=\bigg|\mathbb{E}\bigg[h\bigg(\sum_{j=1}^rS_j^2+\frac{\lambda-1}{3}\sum_{j=1}^r\frac{S_j^3}{\sqrt{np_j}}+R_\lambda(\boldsymbol{S})\bigg)\bigg]-\chi_{(r-1)}^2h\bigg|\\
&\leq\bigg|\mathbb{E}\bigg[h\bigg(\sum_{j=1}^rS_j^2+\frac{\lambda-1}{3}\sum_{j=1}^r\frac{S_j^3}{\sqrt{np_j}}\bigg)\bigg]-\chi_{(r-1)}^2h\bigg|+R_1,
\end{align*}
where
\begin{equation*}R_1=\|h'\|\mathbb{E}|R_\lambda(\boldsymbol{S})|.
\end{equation*}
Another Taylor expansion and application of the triangle inequality gives us the bound
\begin{equation*}|\mathbb{E}[h(T_\lambda)]-\chi_{(r-1)}^2h|\leq R_1+R_2+R_3+R_4,
\end{equation*}
where
\begin{eqnarray*}R_2&=&|\mathbb{E}[h(\chi^2)]-\chi_{(r-1)}^2h|,\\
R_3&=&\frac{|\lambda-1|}{3}\bigg|\mathbb{E}\bigg[\sum_{j=1}^r\frac{S_j^3}{\sqrt{np_j}}h'\bigg(\sum_{k=1}^rS_k^2\bigg)\bigg]\bigg|, \\
R_4&=&\frac{(\lambda-1)^2}{18}\|h''\|\mathbb{E}\bigg[\bigg(\sum_{j=1}^r\frac{S_j^3}{\sqrt{np_j}}\bigg)^2\bigg],
\end{eqnarray*}
and we recall that Pearson's statistic is given by $\chi^2=\sum_{j=1}^rS_j^2$.

We can bound the remainder $R_2$ immediately using either the bound (\ref{pearbound1}) or the bound (\ref{pearbound1bb}). To bound $R_3$, we recall another bound of \cite{gaunt chi square} (see the bound for $|\mathbb{E}h_2(\boldsymbol{S})|$ given on p.\ 747): for $j=1,\ldots,r$,
\begin{equation*}\bigg|\mathbb{E}\bigg[S_j^3h'\bigg(\sum_{k=1}^rS_k^2\bigg)\bigg]\bigg|\leq\big\{2\|h'\|+202\|h''\|+819\|h^{(3)}\|+100\,974\|h^{(4)}\|\big\}\sum_{k=1}^r\frac{1}{\sqrt{np_k}}.
\end{equation*}
Using this inequality we obtain the bound
\begin{equation*}R_3\leq\frac{|\lambda-1|}{3n}\bigg(\sum_{j=1}^r\frac{1}{\sqrt{p_j}}\bigg)^2\big\{2\|h'\|+202\|h''\|+819\|h^{(3)}\|+100\,974\|h^{(4)}\|\big\}.
\end{equation*}

Let us now bound $R_4$. Using the moment bounds (\ref{6mombound}) and (\ref{sj3sk3z}) (our assumption that $np_j\geq r$ for all $j=1,\ldots r$ allows us to apply this bound) gives
\begin{align*}\mathbb{E}\bigg[\bigg(\sum_{j=1}^r\frac{S_j^3}{\sqrt{np_j}}\bigg)^2\bigg]&=\sum_{j=1}^r\frac{\mathbb{E}[S_j^6]}{np_j}+\sum_{j=1}^r\sum_{k\not= j}^r\frac{\mathbb{E}[S_j^3S_k^3]}{n\sqrt{p_jp_k}} \\
&\leq\sum_{j=1}^r\frac{28}{np_j}+\sum_{j=1}^r\sum_{k\not= j}\bigg\{\frac{6}{nr\sqrt{p_jp_k}}+\frac{4}{n}\bigg\}\leq\frac{38}{nr}\bigg(\sum_{j=1}^r\frac{1}{\sqrt{p_j}}\bigg)^2.
\end{align*}
Therefore
\begin{equation*}R_4\leq\frac{19(\lambda-1)^2}{9nr}\|h''\|\bigg(\sum_{j=1}^r\frac{1}{\sqrt{p_j}}\bigg)^2.
\end{equation*}

Lastly, we bound $R_1$. We consider the cases $\lambda=0$ (the likelihood ratio statistic), $\lambda\geq3$ and $\lambda\in(-1,3)\setminus\{0\}$ separately.  Using that $S_j=(U_j-np_j)/\sqrt{np_j}$ and then applying inequality (\ref{lembound1}) of Lemma \ref{lem1}, we have that, for $\lambda=0$,
\begin{align}R_1&=\|h'\|\mathbb{E}|R_0(\boldsymbol{S})|\nonumber\\
&=\|h'\|\mathbb{E}\bigg|\sum_{j=1}^r\bigg[2U_j\log\bigg(\frac{U_j}{np_j}\bigg)-2(U_j-np_j)-\frac{(U_j-np_j)^2}{np_j}+\frac{(U_j-np_j)^3}{3(np_j)^2}\bigg]\bigg|\nonumber\\
\label{r1first}&\leq\frac{2\|h'\|}{3}\sum_{j=1}^r\frac{\mathbb{E}[(U_j-np_j)^4]}{(np_j)^3}\leq\frac{8\|h'\|}{3}\sum_{j=1}^r\frac{1}{np_j},
\end{align}
where we used that $\mathbb{E}[(U_j-np_j)^4]\leq4(np_j)^2$ in the final step (see (\ref{4mombound})).

Suppose now that $\lambda\geq3$. Applying inequality (\ref{lembound3}) of Lemma \ref{lem1}, we have, for $\lambda\geq3$,
\begin{align}R_1&=\|h'\|\mathbb{E}|R_\lambda(\boldsymbol{S})|\nonumber\\
&=\frac{2\|h'\|}{\lambda(\lambda+1)}\mathbb{E}\bigg|\sum_{j=1}^r\bigg[\frac{U_j^{\lambda+1}}{(np_j)^\lambda}-np_j-(\lambda+1)(U_j-np_j)-\frac{\lambda(\lambda+1)}{2}\frac{(U_j-np_j)^2}{np_j}\nonumber\\
\label{ghjkk}&\quad-\frac{(\lambda-1)\lambda(\lambda+1)}{6}\frac{(U_j-np_j)^3}{(np_j)^2}\bigg]\bigg|\\
&\leq\frac{(\lambda-2)(\lambda-1)\|h'\|}{12}\sum_{j=1}^r\mathbb{E}\bigg[\bigg(1+\frac{U_j^{\lambda-3}}{(np_j)^{\lambda-3}}\bigg)\frac{(U_j-np_j)^4}{(np_j)^3}\bigg]\nonumber\\
&\leq\frac{(\lambda-2)(\lambda-1)\|h'\|}{12}\sum_{j=1}^r\bigg\{\frac{\mathbb{E}[(U_j-np_j)^4]}{(np_j)^3}\nonumber\\
&\quad+\frac{1}{(np_j)^{\lambda}}\big(\mathbb{E}\big[(U_j-np_j)^6\big]\big)^{2/3}\big(\mathbb{E}[U_j^{3\lambda-9}]\big)^{1/3}\bigg\}\nonumber\\
&\leq\frac{(\lambda-2)(\lambda-1)\|h'\|}{12}\sum_{j=1}^r\bigg\{\frac{4}{np_j}+\frac{28^{2/3}\times (\mathrm{e}^{9/4})^{1/3}}{np_j}\bigg\}\nonumber\\
\label{r1second}&\leq 2(\lambda-2)(\lambda-1)\|h'\|\sum_{j=1}^r\frac{1}{np_j}.
\end{align}
Here we used H\"older's inequality to obtain the second inequality; to get the third inequality we used the moment bounds (\ref{4mombound}), (\ref{6mombound}) and (\ref{binbound1}), respectively, $\mathbb{E}\big[(U_j-np_j)^4\big]\leq4(np_j)^2$, $\mathbb{E}\big[(U_j-np_j)^6\big]\leq28(np_j)^3$ and $\mathbb{E}[U_j^{3\lambda-9}]\leq \mathrm{e}^{9/4}(np_j)^{3\lambda-9}$, where the latter bound is valid due to our assumption that $np_*\geq2(\lambda-3)^2$ for $\lambda\geq3$.  

Lastly, we consider the case $\lambda\in(-1,3)\setminus\{0\}$. Applying inequality (\ref{lembound32}) of Lemma \ref{lem1} to (\ref{ghjkk}) and proceeding similarly to before gives that, for $\lambda\in(-1,3)\setminus\{0\}$,
\begin{align}\label{r1third}R_1\leq\frac{|(\lambda-2)(\lambda-1)|\|h'\|}{6(\lambda+1)}\sum_{j=1}^r\frac{\mathbb{E}[(U_j-np_j)^4]}{(np_j)^3}\leq \frac{2|(\lambda-2)(\lambda-1)|\|h'\|}{3(\lambda+1)}\sum_{j=1}^r\frac{1}{np_j}.
\end{align}
To obtain a universal bound on $|\mathbb{E}[h(T_\lambda)]-\chi_{(r-1)}^2h|$ that is valid for all $\lambda>-1$, we observe that we can take the following upper bound for $R_1$ (valid for $\lambda\geq3$ under the condition that $np_*\geq 2(\lambda-3)^2$)
\begin{align}\label{r1final}R_1\leq\frac{|(\lambda-1)(\lambda-2)(12\lambda+13)|\|h'\|}{6(\lambda+1)}\sum_{j=1}^r\frac{1}{np_j},
\end{align}
which can be seen to be greater than each of the upper bounds (\ref{r1first}), (\ref{r1second}) and (\ref{r1third}) that were obtained for the cases $\lambda=0$, $\lambda\geq3$ and $\lambda\in(-1,3)\setminus\{0\}$, respectively. 

Finally, we sum up our bound (\ref{r1final}) for $R_1$ and our bounds for $R_2$, $R_3$ and $R_4$. To obtain a compact final bound we use the inequality $\big(\sum_{j=1}^rp_j^{-1/2}\big)^2\leq r\sum_{j=1}^rp_j^{-1}$, which follows from the Cauchy-Schwarz inequality. Using (\ref{pearbound1}) to bound $R_2$ gives us the bound (\ref{pdbound}), whilst using (\ref{pearbound1bb}) to bound $R_2$ yields (\ref{pdbound22}). This completes the proof. \hfill $\Box$

\begin{remark}It was a fortunate accident that the term $R_3$ had previously been bounded in \cite{gaunt chi square}. That this term is $O(n^{-1})$ is in some sense the key reason as to why the $O(n^{-1})$ rate is attained. Indeed, through our use of Taylor expansions we could guarantee that the remainder terms $R_1$ and $R_4$ would be $O(n^{-1})$, and $R_2=|\mathbb{E}[h(\chi^2)]-\chi_{(r-1)}^2h|$ had of course already been shown to be $O(n^{-1})$ by \cite{gaunt chi square}. To give a self-contained account as to why the $O(n^{-1})$ rate is attained, we therefore briefly sketch why $R_3=O(n^{-1})$.

Let $g(\boldsymbol{s})=\sum_{j=1}^r p_j^{-1/2}s_j^3h'(\sum_{k=1}^r s_k^2)$. Observe that $g(-\boldsymbol{s})=-g(\boldsymbol{s})$. Now, let $\boldsymbol{Z}$ be a centered $r$-dimensional multivariate normal random vector with covariance matrix equal to that of $\boldsymbol{S}$. Then, because $\boldsymbol{Z}=_d-\boldsymbol{Z}$, we have that $\mathbb{E}[g(\boldsymbol{Z})]=-\mathbb{E}[g(\boldsymbol{-Z})]=-\mathbb{E}[g(\boldsymbol{Z})]$, so that $\mathbb{E}[g(\boldsymbol{Z})]=0$. Therefore
\begin{align*}R_3=\frac{|\lambda-1|}{3\sqrt{n}}|\mathbb{E}[g(\boldsymbol{S})]|=\frac{|\lambda-1|}{3\sqrt{n}}|\mathbb{E}[g(\boldsymbol{S})]-\mathbb{E}[g(\boldsymbol{Z})]|.
\end{align*}
The quantity $|\mathbb{E}[g(\boldsymbol{S})]-\mathbb{E}[g(\boldsymbol{Z})]|$ can then be bounded to order $O(n^{-1/2})$ by Stein's method for multivariate normal approximation with polynomial growth rate test functions \cite{gaunt normal}.
\end{remark}



\noindent{\emph{Proof of Theorem \ref{thm2}.}} The proof is similar to that of Theorem \ref{thm1}, but a little shorter and simpler. For our aim of obtaining an order $n^{-1/2}$ bound, we express $T_\lambda$ in the form 
\begin{equation*}T_\lambda=\sum_{j=1}^rS_j^2+R_\lambda(\boldsymbol{S})=\chi^2+R_\lambda(\boldsymbol{S}),
\end{equation*}
where, for $\lambda\in(-1,\infty)\setminus\{0\}$,
\begin{align*}R_\lambda(\boldsymbol{S})&=\frac{2}{\lambda(\lambda+1)}\sum_{j=1}^r\bigg[np_j\bigg(1+\frac{S_j}{\sqrt{np_j}}\bigg)^{\lambda+1}-np_j-(\lambda+1)\sqrt{np_j}S_j-\frac{\lambda(\lambda+1)}{2}S_j^2\bigg],
\end{align*}
and, for $\lambda=0$,
\begin{align*}R_0(\boldsymbol{S})&=\sum_{j=1}^r\bigg[2\big(np_j+\sqrt{np_j}S_j\big)\log\bigg(1+\frac{S_j}{\sqrt{np_j}}\bigg)-2\sqrt{np_j}S_j-S_j^2\bigg].
\end{align*}
Now, by a first-order Taylor expansion and the triangle inequality we have that
\begin{align*}|\mathbb{E}[h(T_\lambda)]-\chi_{(r-1)}^2h|=|\mathbb{E}[h(\chi^2+R_\lambda(\boldsymbol{S}))]-\chi_{(r-1)}^2h|\leq|\mathbb{E}[h(\chi^2)]-\chi_{(r-1)}^2h|+R_1,
\end{align*}
where
\begin{equation*}R_1=\|h'\|\mathbb{E}|R_\lambda(\boldsymbol{S})|.
\end{equation*}

The quantity $|\mathbb{E}[h(\chi^2)]-\chi_{(r-1)}^2h|$ can be immediately bounded using either the bound (\ref{pearbound01}) or the bound (\ref{pearbound01bb}).
To bound $R_1$, we consider the cases $\lambda=0$, $\lambda\geq2$ and $\lambda\in(-1,2)\setminus\{0\}$ separately. For $\lambda=0$, using inequality (\ref{lembound2}) of Lemma \ref{lem2} we obtain the bound
\begin{align}R_1
&=\|h'\|\mathbb{E}\bigg|\sum_{j=1}^r\bigg[2U_j\log\bigg(\frac{U_j}{np_j}\bigg)-2(U_j-np_j)-\frac{(U_j-np_j)^2}{np_j}\bigg]\bigg|\nonumber\\
\label{r1firstvv}&\leq\|h'\|\sum_{j=1}^r\frac{\mathbb{E}[|U_j-np_j|^3]}{(np_j)^2}\leq 3\|h'\|\sum_{j=1}^r\frac{1}{\sqrt{np_j}},
\end{align}
where we used inequality (\ref{3mombound}) in the final step.

For $\lambda\geq2$ we use inequality (\ref{lembound4}) of Lemma \ref{lem2} to get
\begin{align}R_1&=\frac{2\|h'\|}{\lambda(\lambda+1)}\mathbb{E}\bigg|\sum_{j=1}^r\bigg[\frac{U_j^{\lambda+1}}{(np_j)^\lambda}-np_j-(\lambda+1)(U_j-np_j)-\frac{\lambda(\lambda+1)}{2}\frac{(U_j-np_j)^2}{np_j}\bigg]\bigg|\nonumber\\
&\leq\frac{(\lambda-1)\|h'\|}{3}\sum_{j=1}^r\mathbb{E}\bigg[\bigg(1+\frac{U_j^{\lambda-2}}{(np_j)^{\lambda-2}}\bigg)\frac{|U_j-np_j|^3}{(np_j)^2}\bigg]\nonumber\\
&\leq\frac{(\lambda-1)\|h'\|}{3}\sum_{j=1}^r\bigg\{\frac{\mathbb{E}[|U_j-np_j|^3]}{(np_j)^2}+\frac{1}{(np_j)^{\lambda}}\big(\mathbb{E}\big[(U_j-np_j)^6\big]\big)^{1/2}\big(\mathbb{E}[U_j^{2\lambda-4}]\big)^{1/2}\bigg\}\nonumber\\
\label{r1secondvv}&\leq\frac{(\lambda-1)\|h'\|}{3}\sum_{j=1}^r\bigg\{\frac{3}{\sqrt{np_j}}+\frac{28^{1/2}\times \mathrm{e}^{1/2}}{\sqrt{np_j}}\bigg\}\leq 4(\lambda-1)\|h'\|\sum_{j=1}^r\frac{1}{\sqrt{np_j}}.
\end{align}
Here we used the Cauchy-Schwarz inequality to obtain the second inequality; to get the third inequality we used the moment bounds (\ref{3mombound}), (\ref{6mombound}) and (\ref{binbound1}), respectively, $\mathbb{E}\big[(U_j-np_j)^4\big]\leq4(np_j)^2$, $\mathbb{E}\big[(U_j-np_j)^6\big]\leq28(np_j)^3$ and $\mathbb{E}[U_j^{2\lambda-4}]\leq \mathrm{e}(np_j)^{2\lambda-4}$, where the latter bound is valid due to our assumption that $np_*\geq2(\lambda-2)^2$ for $\lambda\geq2$.  

Lastly, we consider the case $\lambda\in(-1,2)\setminus\{0\}$. Using inequality (\ref{lembound42}) of Lemma \ref{lem2} to (\ref{ghjkk}) and proceeding similarly to before gives that, for $\lambda\in(-1,2)\setminus\{0\}$,
\begin{align}\label{r1thirdvv}R_1\leq\frac{|\lambda-1|\|h'\|}{\lambda+1}\sum_{j=1}^r\frac{\mathbb{E}[|U_j-np_j|^3]}{(np_j)^2}\leq \frac{3|\lambda-1|\|h'\|}{\lambda+1}\sum_{j=1}^r\frac{1}{\sqrt{np_j}}.
\end{align}
To obtain a universal bound on $|\mathbb{E}[h(T_\lambda)]-\chi_{(r-1)}^2h|$ that is valid for all $\lambda>-1$, we observe that we can take the following upper bound for $R_1$ (valid for $\lambda\geq2$ provided $np_*\geq 2(\lambda-2)^2$)
\begin{equation}\label{r1finalvv}R_1\leq\frac{|(\lambda-1)(4\lambda+7)|\|h'\|}{\lambda+1}\sum_{j=1}^r\frac{1}{\sqrt{np_j}},
\end{equation}
which can be seen to be greater than each of the upper bounds (\ref{r1firstvv}), (\ref{r1secondvv}) and (\ref{r1thirdvv}) that were obtained for the cases $\lambda=0$, $\lambda\geq2$ and $\lambda\in(-1,2)\setminus\{0\}$, respectively. 

Summing up our bound (\ref{r1finalvv}) for $R_1$ and either the bound (\ref{pearbound01}) or the bound (\ref{pearbound01bb}) for $|\mathbb{E}[h(\chi^2)]-\chi_{(r-1)}^2h|$ now yields the desired bounds (\ref{pdbound0}) and (\ref{pdbound022}), respectively. \hfill $\Box$

\vspace{3mm}

\noindent{\emph{Proof of Corollary \ref{cor1}.}}
Let $\alpha>0$, and for fixed $z>0$ define
\begin{equation*}h_\alpha(x)=\begin{cases} 1, & \: \mbox{if } x\leq z, \\
1-2(x-z)^2/\alpha^2, & \:  \mbox {if } z<x\leq z+\alpha/2, \\
2(x-(z+\alpha))^2/\alpha^2, & \:  \mbox {if } z+\alpha/2<x\leq z+\alpha, \\
0, & \:  \mbox {if } x\geq z+\alpha. \end{cases}
\end{equation*}
Then $h_\alpha'$ is Lipshitz with $\|h_\alpha\|=1$, $\|h_\alpha'\|=2/\alpha$ and $\|h_\alpha''\|=4/\alpha^2$.  Let $Y_{r-1}\sim\chi_{(r-1)}^2$. Then, using  (\ref{pdbound0}), together with the basic inequality $\sum_{j=1}^rp_j^{-1/2}\leq r/\sqrt{p_*}$, gives
\begin{align}&\mathbb{P}(T_\lambda\leq z)-\mathbb{P}(Y_{r-1}\leq z)\nonumber\\
&\leq \mathbb{E}[h_\alpha(T_\lambda)]-\mathbb{E}[h_\alpha(Y_{r-1})]+\mathbb{E}[h_\alpha(Y_{r-1})]-\mathbb{P}(Y_{r-1}\leq z)\nonumber \\
&\leq \frac{1}{\sqrt{np_*}}\bigg\{72\|h_\alpha\|+\bigg(552+\frac{|(\lambda-1)(4\lambda+7)|r}{\lambda+1}\bigg)\|h_\alpha'\|+1008\|h_\alpha''\|\bigg\}\nonumber\\
&\quad+\mathbb{P}(z\leq Y_{r-1}\leq z+\alpha)\nonumber \\
\label{kol123}&=\frac{1}{\sqrt{np_*}}\bigg\{72+\frac{1104}{\alpha}+\frac{|(\lambda-1)(4\lambda+7)|r}{(\lambda+1)\alpha}+\frac{4032}{\alpha^2}\bigg\}+\mathbb{P}(z\leq Y_{r-1}\leq z+\alpha).
\end{align}
It was shown by \cite[p.\ 754]{gaunt chi square} that
\begin{equation}\label{ccbbcsb}\mathbb{P}(z\leq Y_{r-1}\leq z+\alpha)\leq \begin{cases} \displaystyle \sqrt{2\alpha/\pi}, & \: \mbox{if } r=2, \\
\alpha/2, & \: \mbox{if } r=3, \\
\displaystyle \frac{\alpha}{2\sqrt{\pi(r-3)}}, & \:  \mbox {if } r\geq4. \end{cases}
\end{equation}
Upper bounds for the cases $r=2$, $r=3$ and $r\geq 4$ follow on substituting inequality (\ref{ccbbcsb}) into (\ref{kol123}) and selecting a suitable $\alpha$.  We choose $\alpha=52.75(np_*)^{-1/5}$ for $r=2$; we take $\alpha=25.27(np_*)^{-1/6}$ when $r=3$; and $\alpha=30.58(r-3)^{1/6}(np_*)^{-1/6}$ for $r\geq4$.  A lower bound can be obtained similarly, which is the negative of the upper bound.  The proof is now complete. \hfill $\square$

\appendix

\section{Stein's method for chi-square approximation and proof of Theorem \ref{thm0}}\label{appchi}

The first detailed study of Stein's method for chi-square approximation was given in the thesis \cite{luk}. For further details on Stein's method for chi-square approximation we refer the reader to \cite{dp18,gaunt chi square,nourdin1}. At the heart of Stein's method for chi-square approximation is the following so-called \emph{Stein equation} for the $\chi_{(p)}^2$ distribution (see \cite{dz91})
\begin{equation}\label{chisteineqn}xf''(x)+\frac{1}{2}(p-x)f'(x)=h(x)-\chi_{(p)}^2h,
\end{equation}
where $h$ is a real-valued test function and we recall that $\chi_{(d)}^2h$ denotes the quantity $\mathbb{E}[h(Y_{p})]$ for $Y_p\sim\chi_{(p)}^2$. It is straightforward to verify that 
\begin{equation}\label{chisteinsoln}f'(x)=\frac{1}{x\rho(x)}\int_0^x\big(h(t)-\chi_{(p)}^2h\big)\rho(t)\,\mathrm{d}t,
\end{equation}
where $\rho(x)$ denotes the $\chi_{(p)}^2$ density, solves the Stein equation (\ref{chisteineqn}) (see p.\ 59, Lemma 1 of \cite{stein2}). 

Suppose one wishes to obtain error bounds for the approximation of the distribution of a random variable of interest $W$ and a $\chi_{(p)}^2$ distribution. The Stein equation (\ref{chisteineqn}) allows one to obtain bounds on the quantity of interest $|\mathbb{E}[h(W)]-\chi_{(p)}^2h|$ by the following transfer principle: evaluate both sides of (\ref{chisteineqn}) at $W$ and take expectations to obtain
\begin{equation}\label{ewewew}|\mathbb{E}[h(W)]-\chi_{(p)}^2h|=\mathbb{E}\bigg[Wf''(W)+\frac{1}{2}(p-W)f'(W)\bigg],
\end{equation}
where $f$ is the solution (\ref{chisteinsoln}). Thus, the chi-square approximation problem is reduced to bounding the right-hand side of (\ref{ewewew}). For this procedure to be effective, one requires suitable estimates for the solution of the Stein equation together with certain lower order derivatives of the solution. We now record some bounds for solution (\ref{chisteinsoln}) that will be needed in the proof of Theorem \ref{thm0}. 

We first state a bound of \cite{luk}. For $h\in C_b^{k,k}(\mathbb{R}^+)$,
\begin{equation}\label{lukluk2}\|f^{(k)}\|\leq\frac{2\|h^{(k)}\|}{k}, \quad k\geq 1.
\end{equation}
The following bound of \cite{gaunt thesis} improved a bound of  \cite{pickett thesis}.  For $h\in C_b^{k-1,k-1}(\mathbb{R}^+)$,
\begin{equation}\label{gamma22}\|f^{(k)}\|\leq \bigg\{\frac{\sqrt{2}(\sqrt{2\pi}+\mathrm{e}^{-1})}{\sqrt{p+2k-2}}+\frac{4}{p+2k-2}\bigg\} \|h^{(k-1)}\|,  \quad k\geq 1,
\end{equation}
where $h^{(0)}\equiv h$. The conditions on the test function $h$ for the bounds (\ref{lukluk2}) and (\ref{gamma22}) are the same (but presented in a simpler manner) as those used by \cite{gaunt chi square}, in which it was noted that the bounds are valid under these conditions that are weaker than those presented in \cite{luk} and \cite{gaunt thesis}. From (\ref{gamma22}), we deduce (using that $p\geq1$ and $k\geq2$) the simplified bounds
\begin{equation}\label{63 bound}\|f^{(k)}\|\leq\frac{6.375\|h^{(k-1)}\|}{\sqrt{p+1}}, \quad k\geq2,
\end{equation}
and
\begin{equation}\label{632 bound}\|f^{(k)}\|\leq \bigg\{\frac{\sqrt{2}(\sqrt{2\pi}+\mathrm{e}^{-1})}{\sqrt{2k-1}}+\frac{4}{2k-1}\bigg\}\|h^{(k-1)}\|, \quad k\geq2.
\end{equation}
Finally, we record a bound of \cite{gaunt chi square}. For $h\in C_b^{k-2,k-1}(\mathbb{R}^+)$,
\begin{align}\label{useful1}\|f^{(k)}\|\leq
\frac{4}{p+2}\big(3\|h^{(k-1)}\|+2\|h^{(k-2)}\|\big), \quad k\geq 2.
\end{align}

\vspace{3mm}

\noindent{\emph{Proof of Theorem \ref{thm0}.}} In the proof of the bounds (\ref{pearbound1}) and (\ref{pearbound01}), \cite{gaunt chi square} obtained the intermediate bounds
\begin{align}|\mathbb{E}[h(\chi^2)]-\chi_{(r-1)}^2h|&\leq \frac{1}{n}\{19\|f''\|+309\|f^{(3)}\|+1089\|f^{(4)}\|\nonumber\\
\label{int111}&\quad+1997\|f^{(5)}\|+100\,974\|f^{(6)}\|\}\bigg(\sum_{j=1}^r\frac{1}{\sqrt{p_j}}\bigg)^2,
\end{align}
and
\begin{equation}\label{int222}|\mathbb{E}[h(\chi^2)]-\chi_{(r-1)}^2h|\leq \frac{1}{\sqrt{n}}\{18\|f''\|+84\|f^{(3)}\|\}\sum_{j=1}^r\frac{1}{\sqrt{p_j}},
\end{equation}
where $f$ is the solution (\ref{chisteinsoln}) of the $\chi_{(r-1)}^2$ Stein equation. 

We note that bounding the derivatives of the solution $f$ of the $\chi_{(r-1)}^2$ Stein equation in (\ref{int111}) and (\ref{int222}) using inequality (\ref{useful1}) then yields the bounds (\ref{pearbound1}) and (\ref{pearbound01}), respectively.
The bounds (\ref{pearbound1bb}) and (\ref{pearbound01bb}) of Theorem \ref{thm0} are obtained by instead using inequality (\ref{63 bound}) to bound the derivatives of $f$ in the bounds (\ref{pearbound1}) and (\ref{pearbound01}), respectively.  To get the bound (\ref{pearbound1cc}) of Theorem \ref{thm0}, we use inequality (\ref{lukluk2}) to bound $\|f''\|,\ldots,\|f^{(5)}\|$ and (\ref{632 bound}) to bound $\|f^{(6)}\|$ in (\ref{int111}). Finally, to obtain the bound  (\ref{pearbound01cc}) we use (\ref{lukluk2}) to bound $\|f''\|$ and (\ref{632 bound}) to bound $\|f^{(3)}\|$ in (\ref{int222}). \hfill $\Box$

\section{Proof of Lemma \ref{lem1}}\label{appa}

\noindent{\emph{Proof of Lemma \ref{lem1}.}} (i) Without loss of generality, we let $a=1$; the general $a>0$ case follows by rescaling. We therefore need to prove that, for $x\geq0$,
\begin{equation}\label{lembound1b}|f(x)|\leq \frac{2}{3}(x-1)^4,
\end{equation}
where
\begin{equation*}f(x):=2x\log(x)-2(x-1)-(x-1)^2+\frac{1}{3}(x-1)^3.
\end{equation*}
It is readily checked that inequality (\ref{lembound1b}) holds for $x=0$ and $x=2$. For $0<x<2$ (that is $|x-1|<1$), we can use a Taylor expansion to obtain the bound
\begin{equation*}|f(x)|=2(x-1)^4\bigg|\sum_{k=0}^\infty\frac{(-1)^k(x-1)^k}{(k+3)(k+4)}\bigg|\leq2(x-1)^4\sum_{k=0}^\infty\frac{1}{(k+3)(k+4)}=\frac{2}{3}(x-1)^4,
\end{equation*}
so inequality (\ref{lembound1b}) is satisfied for $0<x<2$. Now, suppose $x>2$. We have that $f'(x)=2\log(x)-2(x-1)+(x-1)^2$ and $\frac{\mathrm{d}}{\mathrm{d}x}\big(2(x-1)^4/3\big)=8(x-1)^3/3$. By the elementary inequality $\log(u)\leq u-1$, for $u\geq1$, we get that
\begin{align*}|f'(x)|&=|2\log(x)-2(x-1)+(x-1)^2|\\
&=2\log(x)-2(x-1)+(x-1)^2\leq (x-1)^2\leq \frac{8}{3}(x-1)^3,
\end{align*}
where the final inequality holds because $x>2$. Therefore, for $x>2$, $2(x-1)^4/3$ grows faster than $|f(x)|$. Since $|f(2)|<2(2-1)^4/3=2/3$, it follows that inequality (\ref{lembound1b}) holds for all $x>2$. We have now shown that  (\ref{lembound1b}) is satisfied for all $x\geq0$, as required. 

\vspace{2mm}

\noindent{(ii)} Suppose $\lambda\geq3$. Again, without loss of generality, we may set $a=1$. We therefore need to prove that, for $x\geq0$,
\begin{equation}\label{lembound3b}|g_\lambda(x)|\leq \frac{(\lambda-2)(\lambda-1)\lambda(\lambda+1)}{24}\big(1+x^{\lambda-3}\big)(x-1)^4,
\end{equation}
where
\begin{equation}\label{gglll}g_\lambda(x):=x^{\lambda+1}-1-(\lambda+1)(x-1)-\frac{\lambda(\lambda+1)}{2}(x-1)^2-\frac{(\lambda-1)\lambda(\lambda+1)}{6}(x-1)^3.
\end{equation}
By a Taylor expansion of $x^{\lambda+1}$ about $x=1$ we have that
\begin{equation}\label{subglam}g_\lambda(x)
=\frac{(\lambda-2)(\lambda-1)\lambda(\lambda+1)}{24}\xi^{\lambda-3}(x-1)^4,
\end{equation}
where $\xi>0$ is between 1 and $x$. Now, as $\xi$ is between $1$ and $x$ and because $\lambda\geq3$, we have that
\begin{equation*}\xi^{\lambda-3}\leq(\mathrm{max}\{1,x\})^{\lambda-3}
\leq 1+x^{\lambda-3},
\end{equation*}
and applying this inequality to (\ref{subglam}) gives us (\ref{lembound3b}), as required.

\vspace{2mm}

\noindent{(iii)} Suppose now that $\lambda\in(-1,3)\setminus\{0\}$.  Without loss of generality, we set $a=1$, and it therefore suffices to prove that, for $x\geq0$,
\begin{equation}\label{part333} |g_\lambda(x)|\leq \frac{|(\lambda-2)(\lambda-1)\lambda|}{6}(x-1)^4.
\end{equation}
 We shall verify inequality (\ref{part333}) by treating the cases $0< x\leq 2$ and $x\geq2$ separately (it is readily checked that the inequality holds at $x=0$).  For $0<x<2$ (that is $|x-1|<1$) we can use a Taylor expansion to write 
\begin{equation*}g_\lambda(x)=(x-1)^4 G_\lambda(x),
\end{equation*}
where
\begin{equation*}G_\lambda(x)=\sum_{k=0}^\infty\binom{\lambda+1}{k+4}(x-1)^k,
\end{equation*}
and the generalised binomial coefficient is given by $\binom{a}{j}=[a(a-1)(a-2)\cdots(a-j+1)]/j!$, for $a>0$ and $j\in\mathbb{N}$. We now observe that, since $\lambda<3$, the generalised binomial coefficients $\binom{\lambda+1}{k+4}$ are either positive for all even $k\geq0$ and negative for all odd $k\geq1$, or are negative for all even $k\geq0$ and positive for all odd $k\geq1$ (or, exceptionally always equal to zero if $\lambda\in\{1,2\}$, which is a trivial case in which $g_\lambda(x)=0$ for all $x\geq0$). Hence, for $0<x<2$, $G_\lambda(x)$ is bounded above by $|G_\lambda(0)|$, and a short calculation using the expression (\ref{gglll}) (note that $G_\lambda(x)=g_\lambda(x)/(x-1)^4$) shows that $G_\lambda(0)=|(\lambda-2)(\lambda-1)\lambda|/6$. Thus, for $0\leq x<2$, we have the bound
\begin{equation*}|g_\lambda(x)|
\leq\frac{|(\lambda-2)(\lambda-1)\lambda|}{6}(x-1)^4.
\end{equation*}

Suppose now that $x\geq2$. Recall from (\ref{subglam}) that
\begin{equation*}g_\lambda(x)
=\frac{(\lambda-2)(\lambda-1)\lambda(\lambda+1)}{24}\xi^{\lambda-3}(x-1)^4,
\end{equation*}
where $\xi>0$ is between 1 and $x$. In fact, because we are considering the case $x\geq2$, we know that $\xi>1$.  Therefore, since $\lambda<3$, we have that $\xi^{\lambda-3}<1$. Therefore, for $x\geq2$,
\begin{equation*}|g_\lambda(x)|
=\frac{|(\lambda-2)(\lambda-1)\lambda(\lambda+1)|}{24}(x-1)^4\leq\frac{|(\lambda-2)(\lambda-1)\lambda|}{6}(x-1)^4,
\end{equation*}
where the second inequality follows because $\lambda\in(-1,3)\setminus\{0\}$. We have thus proved inequality (\ref{part333}), which completes the proof of the lemma. \hfill $\Box$

\section*{Acknowledgements}
The author is supported by a Dame Kathleen Ollerenshaw Research Fellowship. Preliminary research on this project was carried out whilst the author was supported by EPSRC grant EP/K032402/1. I would like to thank the referee for their helpful comments and suggestions.

\footnotesize

\normalsize

\section{Supplementary Information}

\noindent{\emph{Proof of Lemma \ref{lem2}.}} (i)  Without loss of generality, we let $a=1$; the general $a>0$ case follows by rescaling. We therefore need to prove that, for $x\geq0$,
\begin{equation}\label{lembound2b}|f(x)|\leq |x-1|^3,
\end{equation}
where
\begin{equation*}f(x):=2x\log(x)-2(x-1)-(x-1)^2.
\end{equation*}
It is readily checked that inequality (\ref{lembound2b}) holds for $x=0$ and $x=2$. For $0<x<2$ (that is $|x-1|<1$), we can use a Taylor expansion to obtain the bound
\begin{equation*}|f(x)|=2|x-1|^3\bigg|\sum_{k=0}^\infty\frac{(-1)^k(x-1)^k}{(k+2)(k+3)}\bigg|\leq2|x-1|^3\sum_{k=0}^\infty\frac{1}{(k+2)(k+3)}=|x-1|^3,
\end{equation*}
so inequality (\ref{lembound2b}) is satisfied for $0<x<2$. Now, suppose $x>2$. We have that $f'(x)=2\log(x)-2(x-1)$ and $\frac{\mathrm{d}}{\mathrm{d}x}\big((x-1)^3\big)=3(x-1)^2$. By the inequality $\log(u)\leq u-1$, for $u\geq1$, we get that
\begin{align*}|f'(x)|&=|2\log(x)-2(x-1)|=2(x-1)-2\log(x)\leq (x-1)^2\leq 3(x-1)^2,
\end{align*}
where the final inequality holds because $x>2$. Therefore, for $x>2$, $(x-1)^3$ grows faster than $|f(x)|$. Since $|f(2)|=(2-1)^3=1$, it follows that inequality (\ref{lembound2b}) holds for all $x>2$. We have now shown that inequality (\ref{lembound2b}) is satisfied for all $x\geq0$, as required. 

\vspace{2mm}

\noindent{(ii)} Again, without loss of generality, we may set $a=1$. We therefore need to prove that, for $x\geq0$,
\begin{equation}\label{lembound3bu}|g_\lambda(x)|\leq \frac{(\lambda-1)\lambda(\lambda+1)}{6}\big(1+x^{\lambda-2}\big)|x-1|^3,
\end{equation}
where
\begin{equation}\label{gglllu}g_\lambda(x):=x^{\lambda+1}-1-(\lambda+1)(x-1)-\frac{\lambda(\lambda+1)}{2}(x-1)^2.
\end{equation}
By a Taylor expansion of $x^{\lambda+1}$ about $x=1$ we have that
\begin{equation}\label{subglamu}g_\lambda(x)
=\frac{(\lambda-1)\lambda(\lambda+1)}{6}\xi^{\lambda-2}(x-1)^3,
\end{equation}
where $\xi>0$ is between 1 and $x$. Now, as $\xi$ is between $1$ and $x$ and because $\lambda\geq2$, we have that
\begin{equation*}\xi^{\lambda-2}\leq(\mathrm{max}\{1,x\})^{\lambda-2}
\leq 1+x^{\lambda-2},
\end{equation*}
and applying this inequality to (\ref{subglamu}) gives us (\ref{lembound3bu}), as required.

\vspace{2mm}

\noindent{(iii)} Suppose now that $\lambda\in(-1,2)\setminus\{0\}$.  Without loss of generality, we set $a=1$, and it therefore suffices to prove that, for $x\geq0$,
\begin{equation}\label{part333u} |g_\lambda(x)|\leq \frac{|(\lambda-1)\lambda|}{2}|x-1|^3.
\end{equation}
 We shall verify inequality (\ref{part333u}) by treating the cases $0< x\leq 2$ and $x\geq2$ separately (it is readily checked that the inequality holds at $x=0$).  For $0<x<2$ (that is $|x-1|<1$) we can use a Taylor expansion to write 
\begin{equation*}g_\lambda(x)=(x-1)^3 G_\lambda(x),
\end{equation*}
where
\begin{equation*}G_\lambda(x)=\sum_{k=0}^\infty\binom{\lambda+1}{k+3}(x-1)^k,
\end{equation*}
and the generalised binomial coefficient is given by $\binom{a}{j}=[a(a-1)(a-2)\cdots(a-j+1)]/j!$, for $a>0$ and $j\in\mathbb{N}$. We now observe that, since $\lambda<2$, the generalised binomial coefficients $\binom{\lambda+1}{k+3}$ are either positive for all even $k\geq0$ and negative for all odd $k\geq1$, or are negative for all even $k\geq0$ and positive for all odd $k\geq1$ (or, exceptionally always equal to zero if $\lambda=1$, which is a trivial case in which $g_\lambda(x)=0$ for all $x\geq0$). Hence, for $0<x<2$, $G_\lambda(x)$ is bounded above by $|G_\lambda(0)|$, and a short calculation using the expression (\ref{gglllu}) (note that $G_\lambda(x)=g_\lambda(x)/(x-1)^3$) shows that $G_\lambda(0)=|(\lambda-1)\lambda|/2$. Thus, for $0\leq x<2$, we have the bound
\begin{equation*}|g_\lambda(x)|
\leq\frac{|(\lambda-1)\lambda|}{2}|x-1|^3.
\end{equation*}

Suppose now that $x\geq2$. Recall from (\ref{subglamu}) that
\begin{equation*}g_\lambda(x)
=\frac{(\lambda-1)\lambda(\lambda+1)}{6}\xi^{\lambda-2}(x-1)^3,
\end{equation*}
where $\xi>0$ is between 1 and $x$. In fact, because we are considering the case $x\geq2$, we know that $\xi>1$.  Therefore, since $\lambda<2$, we have that $\xi^{\lambda-2}<1$. Therefore, for $x\geq2$,
\begin{equation*}|g_\lambda(x)|
=\frac{|(\lambda-1)\lambda(\lambda+1)|}{6}|x-1|^3\leq\frac{|(\lambda-1)\lambda|}{2}|x-1|^3,
\end{equation*}
where the second inequality follows because $\lambda\in(-1,2)\setminus\{0\}$. We have thus proved inequality (\ref{part333u}), which completes the proof of the lemma. \hfill $\Box$

\end{document}